\theoremstyle{plain}
\newtheorem{theorem}{Theorem}[section]
\newtheorem{lemma}[theorem]{Lemma}
\newtheorem{corollary}[theorem]{Corollary}
\newtheorem{proposition}[theorem]{Proposition}
\theoremstyle{definition}
\newtheorem{definition}[theorem]{Definition}
\newtheorem{example}[theorem]{Example}
\newtheorem{remark}[theorem]{Remark}
\newcommand{\B}[1]{\mathbb #1}
\DeclareMathOperator{\spn}{span}
\newcommand{\ev}{\mathrm{ev}}
\newcommand{\p}{\partial}
\newcommand{\ideal}[1]{( #1 )}
\newcommand{\Span}[1]{\langle #1 \rangle}
\newcommand*{\dashedRightArrow}{\mathbin{\tikz [baseline=-0.25ex,-latex, dashed] \draw [->,densely dashed] (0pt,0.5ex) -- (1.3em,0.5ex);}}
\newcommand{\QQ}{{\B Q}}
\newcommand{\CC}{{\B C}}
\newcommand{\VV}{{\B V}}
\newcommand{\KK}{{\B K}}
\newcommand{\FF}{{\B F}}
\newcommand{\DD}{\mathcal D}
\newcommand{\mm}{\mathfrak m}
\newcommand{\ZZp}{\B N}
\newcommand{\NN}{\B N}
\newcommand{\cl}[1]{\overline{#1}}
\newcommand{\blank}{\underline{\phantom{n}}}
\newcommand{\xx}{\boldsymbol{x}}
\renewcommand{\tt}{\boldsymbol{t}}
\newcommand{\setOfNOs}{set of Noetherian operators}
\algrenewcommand\algorithmicrequire{\textbf{Input}}
\algrenewcommand\algorithmicensure{\textbf{Output}}
\title{Noetherian operators and primary decomposition}
\author{Justin Chen, Marc H\"ark\"onen, Robert Krone, Anton Leykin}
\address{School of Mathematics, Georgia Institute of Technology,
Atlanta, Georgia}
\email{\{justin.chen,leykin\}@math.gatech.edu}
\email{harkonen@gatech.edu}
\thanks{Research of MH and AL is supported in part by NSF DMS-1719968. MH is partially supported by the Vilho, Yrj\"o and Kalle V\"ais\"al\"a Foundation}
\address{Department of Mathematics, University of California, Davis, California}
\email{rkrone@math.ucdavis.edu}
\subjclass[2010]{14Q15, 14-04, 13N05, 65L80, 65D05}
\keywords{Noetherian operators, inverse systems, primary decomposition}
\begin{document}
\maketitle

\begin{abstract}
Noetherian operators are differential operators that encode primary components of a polynomial ideal. 
We develop a framework, as well as algorithms, for computing Noetherian operators with local dual spaces, both symbolically and numerically. 
For a primary ideal, such operators provide an  alternative representation to one given by a set of generators. 
This description fits well with numerical algebraic geometry, taking a step toward the goal of numerical primary decomposition.
\end{abstract}

\section{Introduction} 
\label{sec:intro}
A fundamental problem in computational algebra is \emph{primary decomposition}: given an ideal, find the associated primes, and express the ideal as an intersection of primary components. 
When the ideal is radical, this corresponds geometrically to decomposing an algebraic variety into a union of irreducible components.

Algorithms implemented in computer algebra systems (\cite{gianni1988grobner}, \cite{shimoyama1996localization}, \cite{eisenbud1992direct}, \cite{decker1999primary}) perform primary decomposition for ideals in polynomial rings by producing a set of ideal generators for each primary component.
Although providing generators is the most direct way to represent a primary ideal, in practice it is often infeasible to compute primary decomposition this way, e.g. due to the size of the generators. 
Thus it makes sense to seek an alternative approach to primary decomposition which can harness the power of numerical methods. 
The natural setting for this is {\em numerical algebraic geometry}~\cite{Sommese-Wampler-Verschelde-intro,Sommese-Wampler-book}, which provides a suite of algorithms for computing with complex algebraic varieties using numerical techniques.
For certain tasks, numerical methods may solve problems that are difficult for typical symbolic methods. 
As an example, {numerical irreducible decomposition}~\cite{sommese2001numerical}  has been used to decompose varieties that were outside the feasible range of symbolic algorithms; see for instance \cite{bates2011toward,hauenstein2018exceptional}.

In contrast to the description by a set of generators, a primary ideal $I$ can be described by two pieces of data: 
its minimal prime $\sqrt{I}$ (or geometrically, the variety $\VV(I)$), and the \emph{multiplicity structure} of $I$ over $\sqrt{I}$. 
One can describe the multiplicity structure of $I$ via associated differential operators on $\VV(I)$:

\begin{definition}\label{def:noethOps}
    A set $N$ of differential operators with polynomial coefficients is called a set of Noetherian operators for $I$ if $f \in I \iff D \bullet f \in \sqrt{I} \; \forall D \in N$.
\end{definition}

The idea of representing a (primary) ideal in a polynomial ring via a dual set of differential operators is both natural and classical, dating back to Macaulay (who introduced inverse systems in \cite{macaulay1994algebraic}) and Gr\"obner \cite{grobner1938neue}. 
Since their introduction by Palamodov in 1970 \cite{palamodov1970linear}, Noetherian operators have been sporadically studied in the literature: \cite{oberst1999construction}, \cite{sturmfels2002solving}, \cite{nonkan2013weyl}, and \cite{yairon_roser_bernd}.
Symbolic algorithms to compute Noetherian operators were developed and implemented in \cite{damiano} and \cite{yairon_roser_bernd}.

\medskip

Our contribution consists of new algorithms to compute a set of Noetherian operators representing a primary ideal, as well as theoretical results leading up to them. 
We develop two algorithms: one using exact symbolic computation (\Cref{alg:symb_zero_dim}) and the other based on hybrid symbolic-numeric methods of numerical algebraic geometry (\Cref{alg:mainAlgorithm}).

Our symbolic algorithm follows a path started by Macaulay \cite{macaulay1994algebraic} reducing the problem to linear algebra. 
The potential of this approach is that the body of work in this direction may be adapted to computation of Noetherian operators; for instance, optimizations of the algorithm as in \cite{Mourrain:inverse-systems} are possible.

Our numerical algorithm may solve problems that are out of reach for purely symbolic techniques (cf. e.g. \Cref{ex:carpet}). 
Given an ideal with no embedded components, our numerical algorithm combined with numerical irreducible decomposition leads to \emph{numerical primary decomposition} (\Cref{alg:num_primary_decomp}): i.e. a numerical description of all components of the ideal, which e.g. enables a probabilistic membership test. 

Numerical irreducible decomposition algorithms are efficient but set-theoretic in nature. 
In contrast, numerical detection of embedded components, studied in \cite{leykin2008numerical,krone2017numerical}, is a rather difficult task. 
Our procedures for describing primary components via Noetherian operators assumes that the associated primes of the ideal have already been discovered. 
Moreover, our algorithms rely on primes being isolated, i.e. not embedded (see \Cref{rmk:isol}) and therefore do not address the problem of finding embedded components.
Nevertheless, it will be interesting to study in the future whether Noetherian operators can make a contribution here.

\medskip

The paper is organized as follows. 
\Cref{sec:preliminaries} gives a gentle introduction to Noetherian operators and classical dual spaces.
\Cref{sec:macaulay_matrices} generalizes the definition of a dual space to nonrational points and develops theory that leads to a symbolic algorithm based on Macaulay matrices. 
\Cref{sec:numerical_approach} deals with specialization and interpolation of Noetherian operators, leading to a numerical algorithm for computing Noetherian operators as well as an algorithm for numerical primary decomposition. \Cref{sec:gen_properties} concludes with general properties of Noetherian operators for non-primary ideals. 

Algorithms are implemented in Macaulay2 \cite{grayson2002macaulay2}, and the software is available on GitHub\footnote{\texttt{NoetherianOperators} codebase: \url{https://github.com/haerski/NoetherianOperators}.}.

\section{Preliminaries}\label{sec:preliminaries}
Let 
$\KK$ be a field of characteristic $0$ and 
$R := \KK[\xx] = \KK[x_1,\dotsc,x_n]$ a polynomial ring over $\KK$.
For numerical applications our focus will be on the case $\KK=\CC$, as implementations of numerical methods generally use floating point approximations of complex numbers to some fixed precision. 
On the other hand, our symbolic algorithms do not assume that $\KK$ is algebraically closed. We often take $\KK=\QQ$ in examples.

\subsection{Sets of Noetherian operators} \label{sec:NOsets}
In \Cref{def:noethOps}
we consider an ideal $I \subseteq R$ and a set $N$ of differential operators in 
$$W_R := R \langle \partial_1, \ldots, \partial_n \rangle, \quad\text{where } \partial_i := \frac{\partial}{\partial x_i},$$ 
the noncommutative ring of differential operators with coefficients in $R$, known as the $n$-dimensional \emph{Weyl algebra} over $R$.  The differential operators $\partial_1,\ldots, \partial_n$ are $\KK$-linear endomorphisms of $R$ satisfying the relations $\partial_i x_j - x_j \partial_i = \delta_{ij}$.

\begin{remark}[Ideal membership test]
Let $\B V(I) \subseteq \KK^n$ be the affine variety defined by $I$.
A set of Noetherian operators $N = \{D_1,\ldots,D_r\}$ for $I$ gives a probabilistic test for determining if a polynomial $f$ is in $I$ or not, assuming an oracle for sampling a random point from $\B V(I)$ (according to some reasonable distribution).
The set $\{D_1 \bullet f, \ldots, D_r\bullet f\}$ is contained in $\sqrt{I}$ if and only if $f \in I$.
If $p \in \B V(I)$ is general, then $(D_i\bullet f)(p)$ evaluates to zero for all $i = 1,\ldots,r$ if and only if $f \in I$.
\end{remark}

If $I = \sqrt{I}$ is radical, then the singleton $\{1\}$ is a set of Noetherian operators for $I$.
The case of most interest is when $I$ is primary, but not radical.
In this case, a minimal set of Noetherian operators for $I$ has more than one element.  Although such a set need not be unique, its cardinality equals the multiplicity of $I$ over $\sqrt{I}$, which is the ratio $e(I)/e(\sqrt{I})$ of their Hilbert-Samuel multiplicities, see the proof of \Cref{thm:numerical_noetherian_operators}. 

\begin{example}
Let $I = \ideal{(x+y+1)^m} \subseteq \KK[x,y]$, a primary ideal. Then the sets $N_1 = \{1,\p_x, \dots, \p_x^{m-1}\}$ and $N_2 = \{1,\p_y, \dots, \p_y^{m-1}\}$
are both minimal sets of Noetherian operators for $I$.

Note that the generator of $I$ in expanded form consists of $\binom{m+2}{2}$ monomials with integer coefficients that grow with $m$. On the other hand, both $N_1$ and $N_2$ are much simpler expressions of size $m$; moreover, either of them, together with the radical $\sqrt I = \ideal{x+y+1}$, describes the ideal $I$ fully.       
\end{example}

For our numerical algorithm one may not even have generators for the radical of $I$ available, which is the case in~Example~\ref{ex:carpet}. 
Moreover, the input can be a set of generators of an ideal (for which $I$ is a component) which are only available as black-box differentiable evaluation routines. 
We mention this here in order to preempt the common assumption in classical computational algebraic geometry that polynomials are always represented as sums of their monomial terms.

\subsection{Dual spaces}
We start by reviewing the classical theory of Macaulay dual spaces.
The \emph{dual space} $R^*$ is by definition the $\KK$-vector space dual of $R$, i.e. the $\KK$-vector space of $\KK$-linear functionals $R \to \KK$. 
Let $p = (p_1, \ldots, p_n) \in \KK^n$ be a $\KK$-rational point. 
The polynomials $\{(\xx-p)^\alpha \coloneqq (x_1 - p_1)^{\alpha_1} \dotsm (x_n - p_n)^{\alpha_n}\}_{\alpha \in \ZZp^n}$ form a $\KK$-basis of $R$. 
Let $\ev_p : R \to \KK$ denote the evaluation functional at $p$, and $\mm_p := (x_1 - p_1, \ldots, x_n - p_n)$ the maximal ideal in $R$ associated to the point $p$.
Note that $\ev_p$ coincides with the natural surjection $R \twoheadrightarrow R/\mm_p \cong \KK$.

Post-composing differential operators with the evaluation functional produces new functionals. 
Let $\partial_{p,i}$ denote the functional $\ev_p \circ \partial_i$, and for a multi-index $\alpha = (\alpha_1,\dotsc,\alpha_n) \in \ZZp^n$ let
\begin{align*}
	\partial_p^\alpha : R &\rightarrow \KK\\
	f &\mapsto (\ev_p \circ \partial_1^{\alpha_1} \circ \dotsb \circ \partial_n^{\alpha_n})(f).
\end{align*}

The elements of $R^*$ can be expressed as formal power series in the $\partial_{p,i}$, and we write $R^* := \KK\llbracket \partial_p \rrbracket = \KK\llbracket \partial_{p,1}, \dotsc, \partial_{p,n} \rrbracket$. 
The $\KK$-linear span of $\{\partial_p^\alpha\}_{\alpha \in \ZZp^n}$ will be denoted $\KK[\partial_p]$.

\begin{definition}
	Let $I \subseteq R$ be an ideal. 
	The \emph{orthogonal complement of} $I$ is the $\KK$-vector subspace of $R^*$
	\begin{align*}
		I^\perp \coloneqq \{D \in R^* \mid D(f) = 0 \text{ for all } f\in I \}.
	\end{align*}
	If $\mathcal{D}$ is a $\KK$-vector subspace of $R^*$, then the \emph{orthogonal complement of} $\mathcal{D}$ is the $\KK$-vector subspace of $R$
	\begin{align*}
		\mathcal{D}^\perp \coloneqq \{f \in R \mid D(f) = 0 \text{ for all } D\in \mathcal D\}.
	\end{align*}
\end{definition}

\begin{proposition} \label{prop:perp_properties}
	For any ideals $I,J \subseteq R$ and any $\KK$-vector subspace $\mathcal D \subseteq R^*$, we have:
	\begin{enumerate}
			\item $I \subseteq J \iff I^\perp \supseteq J^\perp$
			\item $(I \cap J)^{\perp} = I^\perp + J^\perp$
			\item $(I+J)^\perp = I^\perp \cap J^\perp$
			\item $I^{\perp \perp} = I$, $\mathcal D^{\perp \perp} = \mathcal D$.
		\end{enumerate}
\end{proposition}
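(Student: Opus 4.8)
The plan is to work throughout with the $\KK$-bilinear pairing $R^*\times R\to\KK$, $(D,f)\mapsto D(f)$, which is nondegenerate on both sides: it is nondegenerate on $R^*$ by the definition $R^*=\Hom_\KK(R,\KK)$, and nondegenerate on $R$ because, writing $f=\sum_\alpha c_\alpha(\xx-p)^\alpha$, one has $\partial_p^\beta(f)=\beta!\,c_\beta$, which is nonzero for any $\beta$ with $c_\beta\neq 0$ since $\KK$ has characteristic $0$. Under this pairing $I^\perp$ is identified with $(R/I)^*$, the functionals on $R$ that factor through $R/I$. I would first dispatch the purely formal implications, none of which needs any finiteness: part (3), because $D$ annihilates $I+J$ iff it annihilates $I\cup J$, which spans $I+J$; the forward implication of (1), which is just the restriction of a vanishing condition to a subspace; and the inclusion $I^\perp+J^\perp\subseteq(I\cap J)^\perp$ of (2), which follows from $I\cap J\subseteq I$ and $I\cap J\subseteq J$.

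Next I would prove the first equality of (4), $I^{\perp\perp}=I$. The inclusion $I\subseteq I^{\perp\perp}$ is immediate. For the reverse, if $f\notin I$ then $\bar f\neq 0$ in $R/I$, so extending $\{\bar f\}$ to a $\KK$-basis of $R/I$ and taking the functional that is $1$ on $\bar f$ and $0$ on the other basis vectors gives $\varphi\in(R/I)^*$ with $\varphi(\bar f)\neq 0$; its pullback $D$ along $R\twoheadrightarrow R/I$ lies in $I^\perp$ and satisfies $D(f)\neq 0$, so $f\notin I^{\perp\perp}$. This yields the reverse implication of (1) at once: the operation $\perp$ reverses inclusions of subspaces of $R^*$ by the same one-line argument as above, so $I^\perp\supseteq J^\perp$ gives $I=I^{\perp\perp}\subseteq J^{\perp\perp}=J$.

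For the remaining inclusion $(I\cap J)^\perp\subseteq I^\perp+J^\perp$ of (2), I would dualize the injection $\phi\colon R/(I\cap J)\hookrightarrow (R/I)\oplus(R/J)$, $\bar f\mapsto(\bar f,\bar f)$, which is injective because $\bar f$ vanishes in both $R/I$ and $R/J$ exactly when $f\in I\cap J$. Since $\KK$-linear duality is exact, the dual map $\phi^*\colon (R/I)^*\oplus(R/J)^*\to\bigl(R/(I\cap J)\bigr)^*$ is surjective; unravelling, $\phi^*(D_1,D_2)$ is the functional $f\mapsto D_1(f)+D_2(f)$, so its surjectivity says precisely that, after pulling back to $R$, every element of $(I\cap J)^\perp$ can be written as $D_1+D_2$ with $D_1\in I^\perp$ and $D_2\in J^\perp$. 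Combined with the easy inclusion from the first paragraph, this gives (2).

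Finally, for $\mathcal D^{\perp\perp}=\mathcal D$: the inclusion $\mathcal D\subseteq\mathcal D^{\perp\perp}$ is automatic, and when $\mathcal D=I^\perp$ for an ideal $I$ — the only case needed later — the reverse follows from the first half of (4), since $\mathcal D^{\perp\perp}=I^{\perp\perp\perp}=(I^{\perp\perp})^\perp=I^\perp=\mathcal D$. This clause is where I expect the real subtlety to sit: for a genuinely arbitrary subspace $\mathcal D\subseteq R^*$ the equality can fail — for instance $\mathcal D=\KK[\partial_p]$ has $\mathcal D^\perp=0$ and hence $\mathcal D^{\perp\perp}=R^*\supsetneq\mathcal D$ — because $R$ has countable $\KK$-dimension while $R^*$ does not, so one should ask that $\mathcal D$ be finite-dimensional, or closed in the $\mm_p$-adic topology on $R^*$, or of the form $I^\perp$. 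Modulo pinning down that hypothesis, every step above is either formal or a direct invocation of exactness of $\KK$-linear duality, so I do not anticipate other difficulties.
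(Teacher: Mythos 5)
Your argument is correct, and it is more than the paper supplies: the paper states \Cref{prop:perp_properties} without proof, as a classical fact about Macaulay duality. Your treatment of (1), (3), the easy inclusion of (2), and $I^{\perp\perp}=I$ is the standard one, and your proof of the hard inclusion $(I\cap J)^\perp\subseteq I^\perp+J^\perp$ by dualizing the injection $R/(I\cap J)\hookrightarrow (R/I)\oplus(R/J)$ and using exactness of $\Hom_\KK(-,\KK)$ is clean and complete; identifying $I^\perp$ with $(R/I)^*$ makes all four parts essentially formal.

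Your caveat about the clause $\mathcal D^{\perp\perp}=\mathcal D$ is also correct, and worth flagging: for $\mathcal D=\KK[\partial_p]$ one has $\mathcal D^\perp=0$ (since $\partial_p^\beta(f)=\beta!\,c_\beta$ and $\operatorname{char}\KK=0$), hence $\mathcal D^{\perp\perp}=R^*=\KK\llbracket\partial_p\rrbracket\supsetneq\mathcal D$, so the statement fails for a genuinely arbitrary subspace of $R^*$. The correct hypothesis is that $\mathcal D$ be of the form $V^\perp$ for some subspace $V\subseteq R$ (equivalently, closed in the weak-$*$/pointwise-convergence topology on $R^*$); finite-dimensional subspaces satisfy this, since a functional vanishing on $\bigcap_i\ker D_i$ for finitely many $D_i$ lies in $\spn\{D_i\}$. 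This does not affect anything downstream in the paper: biduality for $\mathcal D$ is only ever invoked for finite-dimensional spaces closed under the $R$-action (as in \Cref{thm:mmm_bijection_rational_point}) or for $\mathcal D=I^\perp$, both of which your argument covers.
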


Note that $R^*$ has a natural $R$-module structure given by
\begin{align*}
	f \cdot \Lambda : R &\to \KK\\
	g &\mapsto \Lambda(fg)
\end{align*}
for $f,g\in R$, $\Lambda \in R^*$. The basis $\{(\xx-p)^\alpha\}_\alpha$ of $R$ acts on $\{\partial_p^\alpha\}_\alpha \subseteq R^*$ in the following way:
\begin{align*}
 	(x_i - p_i) \cdot \partial_p^\alpha = 
		\alpha_i \partial_{p,1}^{\alpha_1} \dotsm \partial_{p,i}^{\alpha_{i} - 1} \dotsm \partial_{p,n}^{\alpha_n}
\end{align*} 
We say that a $\KK$-subspace $\DD \subseteq R^*$ is \emph{closed} under the $R$-action if $\DD$ is an $R$-submodule of $R^*$. 
In general, $\DD$ is an $R$-submodule of $R^*$ iff $\DD^\perp$ is an $R$-submodule of $R$, i.e. an ideal of $R$.

\section{A symbolic approach via Macaulay matrices} 
\label{sec:macaulay_matrices}

\subsection{Dual spaces at nonrational points}\label{sec:dual-spaces-for-nonrational-points}
Next, we provide a generalization of dual spaces for nonrational points. 
For any $R$-algebra $A$, set $W_A := A \otimes_R W_R$, where $W_R := R\langle \partial_1,\dotsc,\partial_n \rangle$ is the Weyl algebra over $R$ (as in \Cref{sec:NOsets}). 
There is a natural action $\blank \bullet \blank \colon W_R \times R \to R$ given by $x_i \bullet f = x_i f$ and $\partial_i \bullet f = \frac{\partial f}{\partial x_i}$, which induces a natural $\KK$-bilinear pairing
\begin{align} \label{def:pairing}
	\langle \cdot , \cdot \rangle_A \colon W_{A} \times R \to A
\end{align}
This pairing is $A$-linear in the first argument, and makes the diagram
\[
		\begin{tikzcd}
			W_R \times R \arrow[r, "\blank \bullet \blank"] \arrow[d]
			& R \arrow[d] \\
			W_{A} \times R \arrow[r, "{\langle \cdot , \cdot \rangle_A}"]
			& A
		\end{tikzcd}
\]
commute. 
It can be viewed as follows: for any $f \in R$, $\langle \partial_i, f \rangle_A$ is the image of $\frac{\partial f}{\partial x_i}$ in $A$. 
We will often omit the subscript in $\langle \cdot,\cdot \rangle_A$ when $A$ is clear from context. 

\begin{definition}\label{def:local_dual_space}
	Let $I \subseteq R$ be an $R$-ideal, and $P \subseteq R$ a prime ideal with residue field $\kappa(P)$. 
	The \emph{local dual space of $I$ at $P$}, denoted $D_P[I]$, is the $\KK$-vector subspace orthogonal to $I$ with respect to the pairing $\langle \cdot, \cdot \rangle_{\kappa(P)}$, that is
	\begin{align*}
		D_P[I] := \{D \in W_{\kappa(P)} \mid \langle D,f \rangle_{\kappa(P)} = 0 \text{ for all } f \in I\}.
	\end{align*}
\end{definition}
This generalizes the definition of local dual spaces in \cite{krone2017eliminating}. 
Note that by $\kappa(P)$-linearity of $\langle \cdot, \cdot \rangle_{\kappa(P)}$ in the first argument, $D_P[I]$ is also a $\kappa(P)$-vector space. 
We call the $R$-module action induced by the $\kappa(P)$-vector space structure on $D_P[I]$ the left $R$-module action. 
We can also define a right $R$-module action on $D_P[I]$ analogous to the $R$-action on $I^\perp$, namely via
\begin{align*}
	\langle D \cdot f, g \rangle = \langle D, fg \rangle
\end{align*}
for $D\in D_P[I]$ and $f,g \in R$. 
These actions give rise to a natural $R$-bimodule structure on $D_P[I]$, cf. \cite{yairon_roser_bernd} for a treatment along these lines. 
Analogous to \Cref{prop:perp_properties}, one has:

\begin{proposition}\label{prop:local_dual_properties}
    Let $I,J \subseteq R$ be ideals and $P \subseteq R$ a prime. Then
    \begin{enumerate}
        \item If $I \subseteq J$, then $D_P[I] \supseteq D_P[J]$
        \item $D_P[I+J] = D_P[I] \cap D_P[J]$
        \item $D_P[I\cap J] = D_P[I] + D_P[J]$.
    \end{enumerate}
\end{proposition}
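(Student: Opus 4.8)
My plan is to dispatch (1), (2), and one inclusion of (3) formally, and then to prove the remaining inclusion of (3) by reducing to a rational point and invoking injectivity. Item (1) is just monotonicity of the orthogonal complement with respect to the pairing of \Cref{def:pairing}. For (2), since $I+J=\spn_\KK(I\cup J)$ and $\langle D,\cdot\rangle_{\kappa(P)}$ is $\KK$-linear, $D$ kills $I+J$ iff it kills $I$ and kills $J$. The inclusion $D_P[I]+D_P[J]\subseteq D_P[I\cap J]$ in (3) follows from (1), since both summands lie in $D_P[I\cap J]$ and the latter is a $\kappa(P)$-subspace of $W_{\kappa(P)}$. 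So the only real content is the reverse inclusion $D_P[I\cap J]\subseteq D_P[I]+D_P[J]$, which the rest of the plan addresses.

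First I would reduce to a rational point by base change. Extending scalars along $\KK\hookrightarrow\kappa(P)$, the ring $R':=R\otimes_\KK\kappa(P)=\kappa(P)[\xx]$ is again a polynomial ring, $P$ becomes the maximal ideal $\mm_\pp$ of the $\kappa(P)$-rational point $\pp:=(\cl{x_1},\dots,\cl{x_n})\in\kappa(P)^n$, and flatness of $\kappa(P)$ over $\KK$ gives $(I\cap J)R'=IR'\cap JR'$ (and likewise for sums). Checking that the Weyl-algebra pairing $\langle\cdot,\cdot\rangle_{\kappa(P)}$ is compatible with this base change identifies $D_P[I]$ with the local dual space $D_{\mm_\pp}[IR']$ computed over the field $\kappa(P)$ at the rational point $\pp$, which reduces the whole statement to this case.

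Over a field $k$, with $S=k[\xx]$ and a rational point $\pp$, the local dual space $D_{\mm_\pp}[\mathfrak c]$ is exactly the annihilator $(0:_E\mathfrak c)\cong\Hom_S(S/\mathfrak c,E)$, where $E:=k[\partial_\pp]$ with the contraction $S$-action is the injective hull of the residue field $S/\mm_\pp$. Applying the contravariant exact functor $\Hom_S(-,E)$ to the short exact sequence
\[
0\longrightarrow S/(I\cap J)\xrightarrow{\ \cl r\mapsto(\cl r,\cl r)\ } S/I\oplus S/J\xrightarrow{\ (\cl a,\cl b)\mapsto \cl a-\cl b\ } S/(I+J)\longrightarrow 0
\]
yields a surjection $\Hom_S(S/I,E)\oplus\Hom_S(S/J,E)\twoheadrightarrow\Hom_S(S/(I\cap J),E)$, which unwinds to $(\psi_1,\psi_2)\mapsto\psi_1(\cl 1)+\psi_2(\cl 1)$. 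Hence any $D\in D_{\mm_\pp}[I\cap J]$ is of the form $\psi_1(\cl 1)+\psi_2(\cl 1)$ with $\psi_i(\cl 1)$ annihilated by $I$ resp.\ $J$, i.e.\ $D\in D_{\mm_\pp}[I]+D_{\mm_\pp}[J]$, as desired.

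The main obstacle I anticipate is the bookkeeping in the base-change step: one must verify carefully that $W_{\kappa(P)}$ and its pairing with $R$ really agree with the Weyl-algebra pairing of $\kappa(P)[\xx]$ at $\pp$, so that $D_P[I]=D_{\mm_\pp}[IR']$ on the nose. It is also worth flagging that the decomposition produced above is genuinely not order-preserving --- e.g.\ with $I=(x)$, $J=(x-y^2)$ and $\pp=0$, the operator $\partial_x\in D_{\mm_0}[I\cap J]$ admits no decomposition using operators of order $<2$ --- which is exactly why the argument must go through injectivity of $E$ rather than truncating to operators of bounded order and arguing with finite-dimensional duality.
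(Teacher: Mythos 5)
Your proof is correct; note that the paper itself offers no argument for this proposition (it is stated as ``analogous to \Cref{prop:perp_properties}''), so you are supplying a genuine proof where the authors only assert one. Parts (1), (2) and the inclusion $D_P[I]+D_P[J]\subseteq D_P[I\cap J]$ are indeed immediate, and your treatment of the only nontrivial inclusion is sound: the identification $D_P[\mathfrak a]=D_{\mm_\pp}[\mathfrak a R']$ for $R'=\kappa(P)[\xx]$ and the tautological point $\pp=(\cl{x_1},\dots,\cl{x_n})$ holds because every element of $\mathfrak a R'$ is a $\kappa(P)$-linear combination of elements of $\mathfrak a$ and the pairing of \Cref{def:pairing} is evaluation at $\pp$; this is precisely the argument of the paper's \Cref{lem:nonrational-to-rational}, which you generalize from maximal ideals to arbitrary primes. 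Flatness of $\KK\hookrightarrow\kappa(P)$ does give $(I\cap J)R'=IR'\cap JR'$, and at a rational point the identification $D_{\mm_\pp}[\mathfrak c]=(0:_E\mathfrak c)\cong\Hom_S(S/\mathfrak c,E)$ with $E=\kappa(P)[\partial_\pp]$ injective (Macaulay--Matlis duality, unproblematic in characteristic $0$) turns your short exact sequence into the desired surjection, buying you exactly the inclusion that cannot be obtained by finite-dimensional truncation. One small imprecision: saying ``$P$ becomes the maximal ideal $\mm_\pp$'' is an abuse, since $PR'$ is in general neither maximal nor prime; what you actually use, and what is true, is that $\mm_\pp\cap R=P$ and that $\langle\cdot,\cdot\rangle_{\kappa(P)}$ factors through evaluation at $\pp$. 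Your closing example showing that the decomposition cannot be chosen order-by-order is correct and rightly explains why the argument should go through injectivity rather than truncated dual spaces.
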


\begin{remark}
When the prime corresponds to a rational point, \Cref{def:local_dual_space} agrees with the classical dual space: indeed, when $P = \mm_p$, the $\KK$-vector spaces $D_P[I]$ and $I^\perp \cap \KK[\partial_p]$ are naturally isomorphic.
\end{remark}

We will show (\Cref{thm:zero_dim_noeth_ops}) that one can obtain Noetherian operators for a primary ideal $I \subseteq R$ by computing a $\kappa(P)$-basis of $D_{\sqrt I}[I]$. 
A natural question that arises is, for a non-primary ideal, whether one can compute Noetherian operators of a primary component without requiring generators of the primary component.
This is indeed the case for components primary to a zero-dimensional isolated prime:

\begin{proposition}\label{prop:noethOpsOfNonPrimary}
	Let $I \subseteq R$ be an ideal, let $P \subseteq R$ be a maximal ideal which is also a minimal prime of $I$, and let $Q$ be the $P$-primary component of $I$. 
	Then $D_P[I] = D_P[Q]$.
\end{proposition}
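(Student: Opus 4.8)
The plan is to prove the two inclusions separately, with essentially all the work in $D_P[I]\subseteq D_P[Q]$. The direction $D_P[Q]\subseteq D_P[I]$ is immediate: since $Q$ is a primary component of $I$ we have $I\subseteq Q$, so \Cref{prop:local_dual_properties}(1) applies.

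For $D_P[I]\subseteq D_P[Q]$ I would use two ingredients. First, the hypothesis that $P$ is a minimal prime of $I$: then the $P$-primary component is the contraction $Q = IR_P\cap R$ of the localization, so for every $g\in Q$ there exists $s\in R\setminus P$ with $sg\in I$. Second, an analysis of the right $R$-action on $W_{\kappa(P)}$. Recalling that the monomials $\partial^\alpha$ form a basis of $W_R$ as a free left $R$-module and that $\kappa(P)=R/P$ since $P$ is maximal, write $D=\sum_\alpha c_\alpha\partial^\alpha$ with $c_\alpha\in\kappa(P)$; then $\langle D,f\rangle_{\kappa(P)}=\sum_\alpha c_\alpha\,\overline{\partial^\alpha\bullet f}$, where $\overline{(\cdot)}$ is reduction mod $P$. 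A Leibniz-rule computation shows that $D\cdot s=\bar s\,D+E$ with $\mathrm{ord}(E)<\mathrm{ord}(D)$, the correction terms only involving coefficients $c_\beta$ of $D$ of larger order. Crucially, $D\cdot s$ stays in $D_P[I]$ when $D$ does, since $\langle D\cdot s,h\rangle=\langle D,sh\rangle=0$ for $h\in I$; hence $E=D\cdot s-\bar s\,D$ also lies in $D_P[I]$.

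With these in place I would induct on $d=\mathrm{ord}(D)$ to show $\langle D,g\rangle_{\kappa(P)}=0$ for all $D\in D_P[I]$ and $g\in Q$. For $d=0$, $D=c_0$ is a scalar and $g\in Q\subseteq\sqrt Q=P$ gives $\bar g=0$, so $\langle D,g\rangle_{\kappa(P)}=c_0\bar g=0$. For $d\geq 1$, pick $s\in R\setminus P$ with $sg\in I$; then $E:=D\cdot s-\bar s\,D\in D_P[I]$ has order $<d$, so $\langle E,g\rangle_{\kappa(P)}=0$ by induction, and
\[
0=\langle D,sg\rangle_{\kappa(P)}=\langle D\cdot s,g\rangle_{\kappa(P)}=\bar s\,\langle D,g\rangle_{\kappa(P)}+\langle E,g\rangle_{\kappa(P)}=\bar s\,\langle D,g\rangle_{\kappa(P)}.
\]
Since $\bar s$ is a unit in the field $\kappa(P)$, this forces $\langle D,g\rangle_{\kappa(P)}=0$, i.e. $D\in D_P[Q]$.

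The step I expect to require the most care is establishing $D\cdot s=\bar s\,D+(\text{strictly lower order})$ together with the stability of $D_P[I]$ under right multiplication: this is where the Weyl-algebra structure, the Leibniz rule, and the right-module (bimodule) structure on $D_P[I]$ actually enter, and the rest is a routine induction. An alternative endgame, slicker but less self-contained, is to note that right multiplication by $s$ is injective on $W_{\kappa(P)}$ because it scales the leading symbol by the unit $\bar s$; since $D_P[I]$ is finite-dimensional over $\kappa(P)$ when $P$ is a minimal prime of $I$, this endomorphism is then an automorphism of $D_P[I]$, so one may write $D=E\cdot s$ with $E\in D_P[I]$ and conclude $\langle D,g\rangle_{\kappa(P)}=\langle E,sg\rangle_{\kappa(P)}=0$ directly; the induction above avoids relying on finite-dimensionality.
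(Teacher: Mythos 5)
Your proof is correct, but it takes a genuinely different route from the paper's. The paper writes $I = Q \cap I'$ with $P$ not associated to $I'$, invokes the additivity $D_P[I \cap I'] = D_P[Q] + D_P[I']$ from \Cref{prop:local_dual_properties}(3), and then kills $D_P[I']$ by observing that $1 \notin D_P[I']$ (since $I' \not\subseteq P$, as $P$ is a minimal prime of $I$) while $D_P[I']$ is closed under the right $R$-action. You instead bypass the intersection decomposition entirely: you use the characterization $Q = IR_P \cap R$ of the primary component at a minimal prime to produce, for each $g \in Q$, an $s \notin P$ with $sg \in I$, and then run an induction on the $\partial$-order using the identity $D \cdot s = \bar{s}\,D + (\text{lower order})$ together with the fact that $\bar{s}$ is a unit in $\kappa(P)$. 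Both arguments ultimately lean on the same engine — the right $R$-module structure and the order-lowering effect of the right action — but your version is more self-contained (it does not use the unproved additivity statement and does not need to argue $1 \notin D_P[I']$), at the cost of being longer; the paper's version is a two-line corollary of general properties of local dual spaces. Your closing remark about the ``slicker endgame'' is right to be cautious: establishing finite-dimensionality of $D_P[I]$ a priori is not obviously easier than the proposition itself, so the induction is the better route.
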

\begin{proof}
    Write $I = Q \cap I'$, where $P$ is not an associated prime of $I'$. 
    Then $D_P[I] = D_P[Q] + D_P[I']$, by \Cref{prop:local_dual_properties}(3). 
    Since $1 \not \in D_P[I']$ and $D_P[I']$ is closed under the right $R$-action, we must have $D_P[I'] = 0$.
\end{proof}

\begin{remark}\label{rmk:isol}
\Cref{prop:noethOpsOfNonPrimary} relies on $P$ being an isolated prime of $I$. 
If $P$ is an embedded prime of $I$, then a $P$-primary component of $I$ is never unique.
Moreover, there is at least one other associated prime $P'$ of $I$ strictly contained in $P$, and on writing $I = Q \cap I'$ where $P$ is not an associated prime of $I'$, it is no longer the case that $1 \notin D_P[I']$, as some associated prime of $I'$ is strictly contained in $P$. 
In fact, since $I'$ has positive dimension, $D_P[I']$ will be infinite-dimensional as a $\KK$-vector space, so there does not exist a finite basis of $D_P[I]$ to represent the multiplicity structure of $Q$.
\end{remark}

\subsection{Zero-dimensional primary ideals} \label{sec:zero_dim_case}
Throughout this subsection, $I$ denotes a zero-dimensional primary ideal in $R = \KK[\xx]$, with $P \coloneqq \sqrt I$. 

\subsubsection{Primary ideals over a rational point}
The simplest case is when $P = \mm_p$ for some $p \in \KK^n$.
The duality for $\mm_p$-primary ideals is summarized in the following:
\begin{theorem}[{\cite[Thm 2.6]{mmm93}}]\label{thm:mmm_bijection_rational_point}
	There is a bijection between $\mm_p$-primary ideals $I \subseteq R$ and finite dimensional subspaces $\DD \subseteq \KK[\partial_p]$ closed under the right $R$-action. 
	The correspondence is given by $I \mapsto I^\perp$ and $\DD \mapsto \DD^\perp$. Moreover $\dim_\KK(I^\perp) = \deg(I) = \dim_\KK(R/I)$ and $\deg(\DD^\perp) = \dim_\KK(\DD)$.
\end{theorem}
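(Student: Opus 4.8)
The plan is to exhibit both correspondences as restrictions of the orthogonality operation $(\cdot)^\perp$, so that the bijection itself follows from \Cref{prop:perp_properties}(4); the content lies in identifying the two images and reading off the dimension formulas. The basic tool is the canonical identification, for any ideal $J \subseteq R$, of $J^\perp$ with $(R/J)^*$: a $\KK$-linear functional on $R$ annihilates $J$ precisely when it factors through $R/J$. In particular, if $R/J$ is finite-dimensional over $\KK$ then $\dim_\KK J^\perp = \dim_\KK R/J$.

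First I would check that $I \mapsto I^\perp$ lands in the asserted target. An $\mm_p$-primary ideal $I$ satisfies $\mm_p^N \subseteq I \subseteq \mm_p$ for some $N$, so $R/I$ is a finite-dimensional local $\KK$-algebra and hence $\dim_\KK I^\perp = \dim_\KK R/I = \deg I < \infty$. From $\mm_p^N \subseteq I$, every $D \in I^\perp$ annihilates $\mm_p^N$; writing $D = \sum_\alpha c_\alpha \partial_p^\alpha \in R^* = \KK\llbracket \partial_p \rrbracket$ and pairing with $(\xx-p)^\alpha$ (which lies in $\mm_p^N$ once $|\alpha| \geq N$) gives $c_\alpha\,\alpha! = 0$, so in characteristic $0$ we get $c_\alpha = 0$ for $|\alpha| \geq N$; thus $I^\perp \subseteq \spn_\KK\{\partial_p^\alpha : |\alpha| < N\} \subseteq \KK[\partial_p]$. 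Finally $I^\perp$ is closed under the right $R$-action because $I = (I^\perp)^\perp$ is an ideal, by the equivalence noted after \Cref{prop:perp_properties}.

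For the reverse direction, let $\DD \subseteq \KK[\partial_p]$ be a finite-dimensional nonzero subspace closed under the right $R$-action (I take $\DD \neq 0$, since $\DD = 0$ would yield the unit ideal). Picking $N$ with $\DD \subseteq \spn_\KK\{\partial_p^\alpha : |\alpha| < N\} = (\mm_p^N)^\perp$ and applying \Cref{prop:perp_properties}(1),(4) yields $\DD^\perp \supseteq \mm_p^N$, so $R/\DD^\perp$ is finite-dimensional and $\sqrt{\DD^\perp} \subseteq \mm_p$. For the reverse containment $\DD^\perp \subseteq \mm_p$ it suffices to show $\ev_p = \partial_p^0 \in \DD$ (then $f \in \DD^\perp$ forces $\ev_p(f) = 0$), and this is where closure under the right action is used: taking $0 \neq D = \sum_\alpha c_\alpha \partial_p^\alpha \in \DD$ of top order $d$ and a multi-index $\alpha_0$ with $|\alpha_0| = d$ and $c_{\alpha_0} \neq 0$, the right action of $(\xx-p)^{\alpha_0}$ annihilates every term of $D$ except $c_{\alpha_0}\partial_p^{\alpha_0}$ and sends the latter to $c_{\alpha_0}\,\alpha_0!\,\partial_p^0$, a nonzero multiple of $\ev_p$ in characteristic $0$; hence $\partial_p^0 \in \DD$. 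So $\DD^\perp$ is $\mm_p$-primary, and it is an ideal by the same equivalence after \Cref{prop:perp_properties}.

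To conclude, the two maps are mutually inverse by \Cref{prop:perp_properties}(4), the equality $\DD^{\perp\perp} = \DD$ being legitimate because $\DD$ is finite-dimensional (equivalently, closed in the relevant topology on $R^*$). For the degrees, $\deg(\DD^\perp) = \dim_\KK R/\DD^\perp = \dim_\KK(R/\DD^\perp)^* = \dim_\KK \DD^{\perp\perp} = \dim_\KK \DD$, using the identification from the first paragraph, while $\dim_\KK I^\perp = \deg I = \dim_\KK R/I$ was shown above. The main obstacle, I expect, is proving that $\DD^\perp$ is genuinely $\mm_p$-primary rather than merely an ideal containing a power of $\mm_p$ — that is, the lowering argument extracting $\partial_p^0 \in \DD$ from closure under the right $R$-action — together with the bookkeeping needed to pass between the polynomial dual $\KK[\partial_p]$ and the full formal dual $R^* = \KK\llbracket \partial_p \rrbracket$ so that \Cref{prop:perp_properties}(4) applies.
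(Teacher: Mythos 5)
The paper does not actually prove this theorem: it is quoted verbatim from Marinari--M\"oller--Mora \cite{mmm93}, so there is no in-paper argument to compare against. Your proof is a correct, self-contained derivation via the double-annihilator formalism, and it supplies exactly the two points of substance: (i) that a finite-dimensional $\DD\subseteq\KK[\partial_p]$ closed under the right $R$-action and nonzero must contain $\ev_p=\partial_p^0$ (your ``lowering'' argument, acting on a top-order term by $(\xx-p)^{\alpha_0}$, which together with $\mm_p^N\subseteq\DD^\perp\subseteq\mm_p$ forces $\DD^\perp$ to be $\mm_p$-primary), and (ii) the identification $J^\perp\cong(R/J)^*$ yielding the dimension formulas. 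Two small remarks. First, the sentence ``so $R/\DD^\perp$ is finite-dimensional and $\sqrt{\DD^\perp}\subseteq\mm_p$'' has the inclusion backwards at that stage: from $\DD^\perp\supseteq\mm_p^N$ you get $\sqrt{\DD^\perp}\supseteq\mm_p$, and the containment $\DD^\perp\subseteq\mm_p$ is precisely what you then establish via $\ev_p\in\DD$; the argument as a whole is fine, but the claim is stated one sentence too early. Second, your caveat that $\DD^{\perp\perp}=\DD$ requires finite-dimensionality (or closedness) is well taken --- \Cref{prop:perp_properties}(4) as stated in the paper is an overstatement for arbitrary subspaces of $R^*$, and your justification (trapping everything inside the finite-dimensional duality between $R/\mm_p^N$ and $(\mm_p^N)^\perp$) is the right way to make it rigorous. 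You also correctly flag the degenerate case $\DD=0\leftrightarrow R$, which the quoted statement implicitly excludes.
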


We describe how to obtain a set of Noetherian operators for an $\mm_p$-primary ideal $I$. 
First, compute a dual basis $D_1, \dotsc, D_m$ of $I^\perp$, where $D_i \in \KK[\partial_{p,1}, \dotsc, \partial_{p,n}]$.
Let $N_i \in W_R$ be the Weyl algebra element obtained by replacing $\partial_{p,i}$ with $\partial_i$. 
Then $\{N_1,\dotsc,N_m\}$ is a set of Noetherian operators for $I$: if $f \in I$, then $0 = D_i(f) = (N_i \bullet f)(p)$, which implies $N_i \bullet f \in \mm_p$ for all $i$.
Conversely, if $N_i \bullet f \in \mm_p$ for all $i$, then $D_i(f) = 0$ for all $i$, hence $f \in I^{\perp \perp} = I$.

\subsubsection{Primary ideals over a non-rational point}
Next, assume $P \ne \mm_p$ for any $p \in \KK^n$, i.e. $P$ does not correspond to any $\KK$-rational point (this happens only when $\KK$ is not algebraically closed). 
If $\cl \KK$ is the algebraic closure of $\KK$, then the extension $P_{\cl \KK}$ of $P$ to $\cl \KK[x_1,\dotsc,x_n]$ is still zero-dimensional and radical, but is no longer prime. 

\begin{proposition}\label{prop:L_is_kappa_P}
	Let $P \subseteq R$ be a maximal ideal, and $\ell \in R$ a linear form such that $\ell(p) \neq \ell(q)$ for all $p \ne q \in \VV(P_{\cl \KK})$. 
	Then:
	\begin{enumerate}
	 	\item There exist univariate polynomials $g,g_1,\dotsc,g_n$ over $\KK$ such that
	 	\begin{align*}
	 		P = ( g(\ell), x_1 - g_1(\ell), \dotsc, x_n - g_n(\ell)\}.
	 	\end{align*}
	 	Furthermore, $\deg(g_i) < \deg(g) = \deg(P)$.  
	 	\item For $p \in \VV(P_{\cl \KK})$, the field $\kappa(P) = R/P$ is the smallest extension of $\KK$ containing all coordinates of $p$.
	 \end{enumerate} 
\end{proposition}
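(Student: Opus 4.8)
The plan is to use the separating form $\ell$ to present $R/P$ as a simple extension $\KK[t]/(g(t))$, read the polynomials $g$ and $g_1,\dotsc,g_n$ off that presentation, and then recover the ideal $P$ itself by a dimension count; part~(2) then falls out by evaluating the presentation at a point. For the presentation: since $P$ is maximal, $\kappa(P)=R/P$ is a finite field extension of $\KK$ of degree $d:=\deg P$, and it is separable because $\mathrm{char}\,\KK=0$; hence $P_{\cl\KK}$ is radical and $\VV(P_{\cl\KK})$ consists of exactly $d$ distinct points. Each $p\in\VV(P_{\cl\KK})\subseteq\cl\KK^n$ is a $\KK$-algebra homomorphism $R\to\cl\KK$, $x_i\mapsto p_i$, whose kernel contains, hence (by maximality of $P$) equals $P$; this identifies $\VV(P_{\cl\KK})$ with the set of $\KK$-embeddings $R/P\hookrightarrow\cl\KK$, and under the embedding attached to $p$ the element $\bar\ell:=\ell+P$ maps to $\ell(p)$. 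Thus the conjugates of $\bar\ell$ over $\KK$ are precisely the $d$ values $\ell(p)$, $p\in\VV(P_{\cl\KK})$, which are pairwise distinct by the hypothesis on $\ell$; so the minimal polynomial $g\in\KK[t]$ of $\bar\ell$ is irreducible of degree $d=[R/P:\KK]$, whence $R/P=\KK[\bar\ell]$ and $\bar\ell\mapsto t$ gives an isomorphism $R/P\cong\KK[t]/(g(t))$.

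\emph{Proof of (1).} Let $g_i\in\KK[t]$ be the unique polynomial of degree $<d$ with $\bar x_i=g_i(\bar\ell)$ in $R/P$. Then $g(\ell)\in P$ and $x_i-g_i(\ell)\in P$ for all $i$, so $P':=\ideal{g(\ell),\,x_1-g_1(\ell),\,\dotsc,\,x_n-g_n(\ell)}\subseteq P$. For the reverse inclusion: modulo $P'$ each $x_i$ is congruent to $g_i(\ell)$, so $R/P'$ is generated as a $\KK$-algebra by the image of $\ell$ alone, and that image is killed by $g$; therefore $\dim_\KK R/P'\le\deg g=d$. Combined with the surjection $R/P'\twoheadrightarrow R/P$ coming from $P'\subseteq P$, this forces $\dim_\KK R/P'=d$ and $P'=P$. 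The bounds $\deg g_i<\deg g=\deg P$ are built into the construction of $g$ and the $g_i$.

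\emph{Proof of (2).} Fix $p\in\VV(P_{\cl\KK})$. The embedding $R/P\hookrightarrow\cl\KK$ attached to $p$ sends $\bar x_i\mapsto p_i$, so its image is the $\KK$-subalgebra $\KK[p_1,\dotsc,p_n]\subseteq\cl\KK$; since $R/P$ is a field, this image is a field, namely $\KK(p_1,\dotsc,p_n)$, which is by definition the smallest subfield of $\cl\KK$ containing $\KK$ and all coordinates of $p$. Hence $R/P\cong\KK(p_1,\dotsc,p_n)$. (Equivalently, by (1) one has $p_i=g_i(\ell(p))$ while $\ell(p)$ is a $\KK$-linear combination of the $p_i$, so $\KK(p_1,\dotsc,p_n)=\KK(\ell(p))\cong\KK[t]/(g(t))\cong R/P$.)

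\emph{Main obstacle.} The only step requiring genuine care is the equality $\deg g=\deg P$: the separating hypothesis bears only on the $\cl\KK$-points of $P$, so one must know there are exactly $\deg P$ of them, i.e.\ that $P_{\cl\KK}$ is reduced --- this is precisely where characteristic zero is used. Everything after that is linear algebra over $\KK$, together with the elementary observation that a maximal ideal of $\cl\KK[\xx]$ containing $P$ contracts to $P$.
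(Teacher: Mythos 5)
Your proof is correct, and it is more self-contained than the paper's: where the paper disposes of part (1) by citing the Shape Lemma of Gianni--Mora, you reprove it from scratch, first showing that the separating form $\ell$ is a primitive element for $\kappa(P)$ (via separability in characteristic zero, the identification of $\VV(P_{\cl\KK})$ with the $d=\deg P$ distinct $\KK$-embeddings $R/P\hookrightarrow\cl\KK$, and the pairwise distinctness of the conjugates $\ell(p)$), and then recovering $P$ from the resulting presentation by the dimension count $\dim_\KK R/P'\le\deg g=\dim_\KK R/P$. That argument is sound; the one step you flag as delicate --- that $P_{\cl\KK}$ is reduced so that there really are $\deg P$ points --- is exactly where the hypothesis is used, and you handle it correctly. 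For part (2) the routes also differ in presentation: the paper deduces it from (1) by noting that the maximal ideal of $\cl\KK[\xx]$ at $p$ contains a linear factor of $g(\ell)$, so any subfield of $\cl\KK$ containing the coordinates of $p$ contains a copy of $\KK[\ell]/(g(\ell))\cong\kappa(P)$; you instead observe directly that the image of the embedding $R/P\hookrightarrow\cl\KK$ attached to $p$ is the field $\KK(p_1,\dotsc,p_n)$, which is cleaner and does not depend on (1) (though, as you note parenthetically, the two arguments are interchangeable). What your version buys is a proof readable without chasing the reference; what the paper's version buys is brevity. No gaps.
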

\begin{proof}
	\begin{enumerate}
		\item This follows from the Shape lemma \cite[Proposition 1.6]{gianni1987algebraic}.
		\item It follows from (1) that
		\begin{align*}
			\kappa(P) = \frac{\KK[x_1,\dotsc,x_n]}{(g(\ell), x_1 - g_1(\ell), \dotsc, x_n - g_n(\ell))} \cong \frac{\KK[\ell]}{(g(\ell))} =: \KK(\beta),
		\end{align*}
		where $\beta$ is a solution to $g(\ell) = 0$. 
		Thus $\VV(P_{\cl\KK})$ contains the point $(g_1(\beta), \dotsc, g_n(\beta)) \in \cl\KK^n$. On the other hand, the maximal ideal in $\cl \KK[\xx]$ associated to $p$ contains a linear factor of $g(\ell)$, so any subfield of $\cl \KK / \KK$ containing all coordinates of $p$ contains an isomorphic copy of $\kappa(P)$. \qedhere
	\end{enumerate}
\end{proof}

Let $p \in \VV(P_{\cl \KK})$ be as in \Cref{prop:L_is_kappa_P}.2, and let $\mm_p \subseteq \kappa(P)[\xx]$ be the associated maximal ideal, which is now rational over the larger field $\kappa(P)$.
Then $\kappa(P) = \kappa(\mm_p)$, so $W_{\kappa(P)} = W_{\kappa(\mm_p)}$. 
This allows us to compare the $\kappa(P)$-vector spaces given by local dual spaces at $P$ and $\mm_p$:
\begin{align*}
	D_P[I] &:= \{D \in W_{\kappa(P)} \colon \langle D, f \rangle_{\kappa(P)} = 0 \text{ for all } f \in I\}\\
	D_{\mm_p}[J] &:= \{D \in W_{\kappa(P)} \colon \langle D, f \rangle_{\kappa(\mm_p)} = 0 \text{ for all } f \in J\}.
\end{align*}
Note that even though $\kappa(P) = \kappa(\mm_p)$, there are distinct pairings
\begin{align*}
	\langle \_ , \_ \rangle_{\kappa(P)} &\colon \kappa(P)[\partial] \times \KK[\xx] \to \kappa(P) \\
	\langle \_ , \_ \rangle_{\kappa(\mm_p)} &\colon \kappa(\mm_p)[\partial] \times \kappa(P)[\xx] \to \kappa(\mm_p)
\end{align*}
arising over different base rings $\KK[\xx]$ and $\kappa(P)[\xx]$. 
However, they do agree on the restriction of $\kappa(P)[\xx]$ to $\KK[\xx]$, in the sense that $\langle D, f \rangle_{\kappa(P)} = \langle D, f \rangle_{\kappa(\mm_p)}$ for $f \in \KK[\xx]$. 
In particular:
\begin{lemma}\label{lem:nonrational-to-rational}
	The $\kappa(P)$-vector spaces $D_{\mm_p}[I_{\kappa(P)}]$ and $D_P[I]$ are equal.
\end{lemma}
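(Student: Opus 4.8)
The plan is to prove the equality as two inclusions of subsets of the common ambient $\kappa(P)$-vector space $W_{\kappa(P)} = W_{\kappa(\mm_p)}$, in each direction reducing the defining orthogonality condition to a condition involving only the original ideal $I \subseteq \KK[\xx]$, and then invoking the agreement $\langle D, f\rangle_{\kappa(P)} = \langle D, f\rangle_{\kappa(\mm_p)}$ for $f \in \KK[\xx]$ recorded just above the lemma.

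The inclusion $D_{\mm_p}[I_{\kappa(P)}] \subseteq D_P[I]$ is the easy one: if $D$ annihilates $I_{\kappa(P)}$ with respect to $\langle\cdot,\cdot\rangle_{\kappa(\mm_p)}$, then in particular it annihilates the subset $I \subseteq I_{\kappa(P)}$, and for $f \in I \subseteq \KK[\xx]$ we have $\langle D, f\rangle_{\kappa(\mm_p)} = \langle D, f\rangle_{\kappa(P)}$, so $D$ annihilates $I$ with respect to $\langle\cdot,\cdot\rangle_{\kappa(P)}$ as well. For the reverse inclusion I would first set up an elementary reformulation, valid for the orthogonal complement of any ideal $J$ with generating set $G$ in a polynomial ring $S$ over its residue algebra $A$: an operator $D$ lies in that complement iff $\langle D, h f\rangle_A = 0$ for every $h \in S$ and every $f \in G$ (this follows from additivity of the pairing in its second slot, and can be read through the right action via $\langle D, hf\rangle = \langle D\cdot h, f\rangle$). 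Applied with $S = \kappa(P)[\xx]$ and $G = I$, this says $D \in D_{\mm_p}[I_{\kappa(P)}]$ iff $\langle D, h f\rangle_{\kappa(\mm_p)} = 0$ for all $h \in \kappa(P)[\xx]$ and $f \in I$; while, on the other side, $D \in D_P[I]$ already forces $\langle D, h f\rangle_{\kappa(P)} = 0$ for all $h \in \KK[\xx]$ and $f \in I$, simply because $h f \in I$.

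Hence the only real content is to promote the coefficient $h$ from $\KK[\xx]$ to $\kappa(P)[\xx]$, and here I would use finiteness of $\kappa(P)/\KK$. Fixing a $\KK$-basis $\omega_1, \dotsc, \omega_d$ of $\kappa(P)$, an arbitrary $h \in \kappa(P)[\xx]$ is $h = \sum_j \omega_j h_j$ with $h_j \in \KK[\xx]$, and $\kappa(P)$-bilinearity of $\langle\cdot,\cdot\rangle_{\kappa(\mm_p)}$ (inherited from the $\kappa(P)$-bilinear action $\bullet$ on $\kappa(P)[\xx]$) gives $\langle D, hf\rangle_{\kappa(\mm_p)} = \sum_j \omega_j \langle D, h_j f\rangle_{\kappa(\mm_p)}$. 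Each $h_j f$ lies in $\KK[\xx]$, so $\langle D, h_j f\rangle_{\kappa(\mm_p)} = \langle D, h_j f\rangle_{\kappa(P)}$, and it also lies in $I$, so for $D \in D_P[I]$ this vanishes; therefore $\langle D, hf\rangle_{\kappa(\mm_p)} = 0$ for all $h \in \kappa(P)[\xx]$ and $f \in I$, placing $D$ in $D_{\mm_p}[I_{\kappa(P)}]$. I do not anticipate a genuine obstacle; the point requiring care is bookkeeping — keeping the two pairings and their distinct base rings $\KK[\xx]$ and $\kappa(P)[\xx]$ apart, and using that $I_{\kappa(P)}$ is by definition generated as an ideal of $\kappa(P)[\xx]$ by the original $I$.
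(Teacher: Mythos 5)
Your proposal is correct and follows essentially the same route as the paper: the easy inclusion from $I \subseteq I_{\kappa(P)}$ together with the agreement of the two pairings on $\KK[\xx]$, and for the reverse inclusion, writing an element of $I_{\kappa(P)}$ as a $\kappa(P)[\xx]$-combination of elements of $I$, expanding the coefficients over a $\KK$-basis of $\kappa(P)$, and using $\kappa(P)$-linearity of the pairing in scalar constants to reduce to pairings against elements of $I \subseteq \KK[\xx]$. The only cosmetic difference is your intermediate reformulation via generators, which the paper carries out inline.
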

\begin{proof}
	The inclusion $D_{\mm_p}[I_{\kappa(P)}] \subseteq D_P[I]$ is clear since $I = I_{\kappa(P)} \cap \KK[\xx]$. 
	For the other inclusion, let $D \in D_P[I]$. 
	Let $\kappa(P)$ be generated by $\{1,k_2,\dotsc,k_e\}$ over $\KK$. 
	If $I = (f_1,\dotsc,f_n)$, then $I_{\kappa(P)}$ is also generated by $f_1,\dotsc,f_r$ in $\kappa(P)[\xx]$. 
	Hence any element $g \in I_{\kappa(P)}$ is of the form $g = \sum_i g_i f_i$ for some $g_i \in \kappa(P)[\xx]$, where the $g_i$ themselves are of the form $g_i = \sum_j g_{i,j} k_j$ for some $g_{i,j} \in \KK[\xx]$. 
	Then as $g_{i,j}f_i \in I \subset \KK[\xx]$,
	\begin{align*}
		\langle D, g \rangle_{\mm_p} = \sum_i \langle D, g_i f_i \rangle_{\kappa(\mm_p)} = \sum_i \sum_j k_j \langle D, g_{i,j}f_i \rangle_{\kappa(\mm_p)} = \sum_i \sum_j k_j \langle D, g_{i,j}f_i \rangle_{\kappa(P)} = 0.\qquad\qedhere
	\end{align*}
\end{proof}

Just as in \Cref{thm:mmm_bijection_rational_point}, there is also a correspondence theorem given in \cite{mmm93} for zero-dimensional primary ideals that are not primary to a rational point.

\begin{proposition}[{\cite[Prop 2.7]{mmm93}}]\label{prop:bijection_non_rational_mmm}
	Let $P \subseteq \KK[\xx]$ be a maximal ideal, and assume $P \neq \mm_q$ for any $q \in \KK^n$. 
	Let $P_{\kappa(P)}$ be the extension of $P$ in $\kappa(P)[\xx]$, and let $\mm_p$ be a minimal prime of $P_{\kappa(P)}$ (for some $p \in \kappa(P)^n$). 
	There is a bijection between $P$-primary ideals $I \subseteq \KK[\xx]$ and finite dimensional subspaces $\DD \subseteq \kappa(P)[\partial_p]$ closed under the right $\kappa(P)[\xx]$-action, given by
	\begin{align*}
		I &\mapsto D_{\mm_p}[I_{\kappa(P)}] \cong \{D \in \kappa(P)[\partial_p] \colon D(f) = 0 \text{ for all } f\in I_{\kappa(P)} \} \cong Q^\perp\\
		\DD &\mapsto \{f \in \kappa(P)[\xx] \colon D(f) = 0 \text{ for all } D \in \DD\} \cap \KK[\xx] \cong \mathcal{D}^\perp \cap \KK[\xx],
	\end{align*}
	where $Q \subseteq \kappa(P)[\xx]$ is the $\mm_p$-primary component of $I_{\kappa(P)}$.
\end{proposition}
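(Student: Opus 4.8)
The plan is to reduce everything to the rational-point case via \Cref{lem:nonrational-to-rational}, and then transport \Cref{thm:mmm_bijection_rational_point} along this reduction. First I would observe that \Cref{lem:nonrational-to-rational} gives $D_{\mm_p}[I_{\kappa(P)}] = D_P[I]$, so the two descriptions of the first map in the statement are literally the same $\kappa(P)$-vector space; the isomorphism with $Q^\perp$ comes from the remark preceding \Cref{thm:zero_dim_noeth_ops} (resp. the rational-point discussion), identifying $D_{\mm_p}[J]$ with $J^\perp \cap \kappa(P)[\partial_p]$ for a $\kappa(P)[\xx]$-ideal $J$, together with the fact that $D_{\mm_p}[I_{\kappa(P)}]$ only sees the $\mm_p$-primary component $Q$ of $I_{\kappa(P)}$ (by \Cref{prop:local_dual_properties}(3) and the argument of \Cref{prop:noethOpsOfNonPrimary}: the other components contribute nothing since $1$ lies in no proper dual space and dual spaces are closed under the right action). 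So the content is the bijectivity of $I \mapsto D_{\mm_p}[I_{\kappa(P)}]$ between $P$-primary ideals of $\KK[\xx]$ and finite-dimensional right-$\kappa(P)[\xx]$-submodules $\DD \subseteq \kappa(P)[\partial_p]$, with inverse $\DD \mapsto \DD^\perp \cap \KK[\xx]$.

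For surjectivity and well-definedness of the inverse map, I would start from such a $\DD$ and apply \Cref{thm:mmm_bijection_rational_point} over the field $\kappa(P)$ (with $\mm_p$ now $\kappa(P)$-rational, which is legitimate since $\kappa(P) = \kappa(\mm_p)$): this produces an $\mm_p$-primary ideal $Q = \DD^\perp \subseteq \kappa(P)[\xx]$ with $D_{\mm_p}[Q] = \DD$. Then I would set $I := Q \cap \KK[\xx]$ and check (a) $I$ is $P$-primary in $\KK[\xx]$ — this is a standard descent fact: contracting a primary ideal along $\KK[\xx] \hookrightarrow \kappa(P)[\xx]$ yields a primary ideal whose radical contracts $\sqrt{Q} = \mm_p$, and $\mm_p \cap \KK[\xx] = P$ by construction of $p$ and $P$ (using \Cref{prop:L_is_kappa_P}); and (b) $Q$ is recovered as the $\mm_p$-primary component of $I_{\kappa(P)} = (Q \cap \KK[\xx])_{\kappa(P)}$. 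Part (b) is the crux: one needs that extending the contraction back up does not lose the component at $\mm_p$, equivalently that $Q$ already contains $I_{\kappa(P)}$ localized at $\mm_p$; since $Q$ is $\mm_p$-primary and $I_{\kappa(P)} \subseteq Q$, it suffices that $Q$ and $I_{\kappa(P)}$ agree after localizing at $\mm_p$, which follows because all the other minimal primes of $I_{\kappa(P)}$ are the Galois conjugates $\mm_{\sigma(p)}$, distinct from $\mm_p$, and the contraction–extension operation permutes them transitively.

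For injectivity, and to see the two maps are mutually inverse on $P$-primary ideals, I would take $I$ $P$-primary in $\KK[\xx]$ and show $I = D_{\mm_p}[I_{\kappa(P)}]^\perp \cap \KK[\xx]$. By \Cref{prop:perp_properties}(4) over $\kappa(P)$ we have $D_{\mm_p}[I_{\kappa(P)}]^\perp = Q$ where $Q$ is the $\mm_p$-primary component of $I_{\kappa(P)}$, so the claim is $I = Q \cap \KK[\xx]$. The inclusion $I \subseteq Q \cap \KK[\xx]$ is immediate since $I \subseteq I_{\kappa(P)} \subseteq Q$; for the reverse, note $I_{\kappa(P)} = Q \cap \bigcap_{\sigma \neq \id} \sigma(Q)$ where $\sigma$ ranges over $\mathrm{Gal}(\kappa(P)/\KK)$ (or the relevant normal closure), since extending a $P$-primary ideal to $\kappa(P)[\xx]$ gives an ideal whose primary components are exactly the Galois translates, and contracting $\bigcap_\sigma \sigma(Q)$ back to $\KK[\xx]$ recovers $I$ because $I_{\kappa(P)} \cap \KK[\xx] = I$. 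Combining, $Q \cap \KK[\xx] \subseteq \bigl(\bigcap_\sigma \sigma(Q)\bigr) \cap \KK[\xx] = I_{\kappa(P)} \cap \KK[\xx] = I$ once one checks $Q \cap \KK[\xx]$ is stable under Galois — which it is, being a contraction from $\KK[\xx]$.

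The main obstacle is the Galois-descent bookkeeping in parts (b) of surjectivity and the reverse inclusion of injectivity: precisely controlling how primary decomposition interacts with the faithfully flat extension $\KK[\xx] \to \kappa(P)[\xx]$ when $\kappa(P)/\KK$ need not be separable-only or Galois, so one may need to pass to a normal closure and track which conjugate primes lie over $P$. Everything else — the identification of $D_{\mm_p}$ with classical dual spaces, the application of \Cref{thm:mmm_bijection_rational_point} over $\kappa(P)$, and \Cref{lem:nonrational-to-rational} — is either quoted or routine. In fact, since the statement is attributed to \cite[Prop 2.7]{mmm93}, I would likely present this as: the bijection is \Cref{thm:mmm_bijection_rational_point} applied over $\kappa(P)$ composed with the contraction/extension correspondence between $P$-primary ideals of $\KK[\xx]$ and $\mm_p$-primary ideals of $\kappa(P)[\xx]$ supplied by \Cref{prop:L_is_kappa_P} and \Cref{lem:nonrational-to-rational}, and merely sketch the descent lemma rather than reprove it.
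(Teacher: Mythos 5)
The paper offers no proof of this proposition: it is quoted verbatim from \cite[Prop 2.7]{mmm93}, so there is nothing internal to compare your argument against line by line. That said, your reconstruction follows the expected route and is essentially sound: identify $D_{\mm_p}[I_{\kappa(P)}]$ with $D_P[I]$ via \Cref{lem:nonrational-to-rational}, discard the non-$\mm_p$ components of $I_{\kappa(P)}$ by the argument of \Cref{prop:noethOpsOfNonPrimary}, apply \Cref{thm:mmm_bijection_rational_point} over the field $\kappa(P)$ where $\mm_p$ is rational, and splice in the contraction/extension correspondence between $P$-primary ideals of $\KK[\xx]$ and $\mm_p$-primary ideals of $\kappa(P)[\xx]$ coming from \Cref{prop:L_is_kappa_P}. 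Two small remarks on the part you flag as the obstacle. First, the paper assumes $\operatorname{char}\KK = 0$ throughout, so $\kappa(P)/\KK$ is automatically separable and $\kappa(P)\otimes_\KK\kappa(P)$ is reduced; the only remaining care is that $\kappa(P)/\KK$ need not be normal, so the other minimal primes of $P_{\kappa(P)}$ need not be $\kappa(P)$-rational and the ``Galois translates of $Q$'' phrasing is only literally correct after passing to a normal closure, as you note. Second, your final sentence is the right call: since the statement is attributed to \cite{mmm93}, the descent bookkeeping is exactly the content one would cite rather than reprove, and your sketch correctly isolates it as the only non-routine step.
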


\Cref{prop:bijection_non_rational_mmm} links our \Cref{def:local_dual_space} to the classical Macaulay dual spaces: the local dual space of $I$ at $P$ corresponds to a finite dimensional space of linear functionals over a field extension where $P$ contains rational point solutions. 
The connection to Noetherian operators is described in the following:

\begin{theorem}\label{thm:zero_dim_noeth_ops}
    Let $P \subseteq R$ be a maximal ideal, and $I \subseteq R$ a $P$-primary ideal.
    \begin{enumerate}
        \item  If $\{D_i\}_i$ spans $D_P[I]$ (as a $\kappa(P)$-vector space), then any preimages $\{N_i\}_i \subseteq W_R$ of $\{D_i\}_i$ is a set of Noetherian operators for $I$.
        \item Conversely, if $\{N_i\}_i \subseteq W_R$ is a set of Noetherian operators for $I$, then their images in $W_{\kappa(P)}$ span $D_P[I]$. 
    \end{enumerate}
	
\end{theorem}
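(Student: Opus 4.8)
The plan is to reduce everything to the rational-point case already spelled out in the excerpt, using \Cref{lem:nonrational-to-rational} to pass between $D_P[I]$ and $D_{\mm_p}[I_{\kappa(P)}]$. The bridge I intend to use is the identity $\langle D, f \rangle_{\kappa(P)} = (D \bullet f)(p)$ for $f \in R$ (or $f \in \kappa(P)[\xx]$), where $D \bullet f$ denotes the action of the Weyl algebra element, and $(\cdot)(p)$ denotes reduction mod $\mm_p$, i.e. substitution of the coordinates of $p$; this is exactly the commuting square defining the pairing $\langle\cdot,\cdot\rangle_A$ in \eqref{def:pairing} applied with $A = \kappa(P)$ and factored through $\kappa(P)[\xx] \twoheadrightarrow \kappa(P)[\xx]/\mm_p \cong \kappa(P)$. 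The key elementary observation tying $\sqrt I = P$ to the point $p$ is: for $h \in R$, $h \in P \iff h \in \mm_p \iff h(p) = 0$, since $P = \mm_p \cap R$ and $\mm_p$ is the (unique, as $P$ is maximal) prime of $P_{\kappa(P)}$ under the genericity choice of $\ell$ — more carefully, $P_{\kappa(P)}$ is radical zero-dimensional, its minimal primes are the $\mm_q$ for $q \in \VV(P_{\cl\KK})$ conjugate to $p$, and $h \in R$ vanishes at one such $q$ iff it vanishes at all of them iff $h \in P$. So membership in $P$ of a polynomial over $R$ can be checked by evaluation at the single point $p$.

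For part (1): suppose $\{D_i\}$ spans $D_P[I]$ over $\kappa(P)$, and let $\{N_i\} \subseteq W_R$ be arbitrary preimages. First I would show the forward implication of \Cref{def:noethOps}: if $f \in I$, then $\langle D_i, f\rangle_{\kappa(P)} = 0$ for all $i$ by definition of $D_P[I]$, hence $(N_i \bullet f)(p) = 0$, i.e. $N_i \bullet f \in \mm_p \cap R = P = \sqrt I$; so $N_i \bullet f \in \sqrt I$ for all $i$. For the reverse implication: suppose $f \in R$ satisfies $N_i \bullet f \in \sqrt I = P$ for all $i$. Then $(N_i \bullet f)(p) = 0$, so $\langle D_i, f \rangle_{\kappa(P)} = 0$ for all $i$; since the $D_i$ span $D_P[I]$ over $\kappa(P)$ and the pairing is $\kappa(P)$-linear in the first argument, $\langle D, f \rangle_{\kappa(P)} = 0$ for all $D \in D_P[I]$. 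Now I invoke \Cref{lem:nonrational-to-rational} to get $D \in D_{\mm_p}[I_{\kappa(P)}]$ for all $D$, i.e. $f$ (viewed in $\kappa(P)[\xx]$) lies in $D_{\mm_p}[I_{\kappa(P)}]^\perp$. By the orthogonality duality — here I need that $D_{\mm_p}[I_{\kappa(P)}]^\perp = (I_{\kappa(P)})^{\perp\perp}\cap\kappa(P)[\xx]$-style reasoning, or more directly \Cref{prop:bijection_non_rational_mmm} which states exactly that $\DD = D_{\mm_p}[I_{\kappa(P)}]$ satisfies $\DD^\perp \cap \KK[\xx] = I$ — we conclude $f \in I_{\kappa(P)} \cap \KK[\xx] = I$. (When $P = \mm_p$ already, this is the argument reproduced verbatim in the text before the theorem, using \Cref{thm:mmm_bijection_rational_point} in place of \Cref{prop:bijection_non_rational_mmm}.)

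For part (2): suppose $\{N_i\} \subseteq W_R$ is a set of Noetherian operators for $I$, and let $D_i \in W_{\kappa(P)}$ be their images. The containment $\spn_{\kappa(P)}\{D_i\} \subseteq D_P[I]$ is immediate: for $f \in I$ we have $N_i \bullet f \in \sqrt I = P$, so $\langle D_i, f\rangle_{\kappa(P)} = (N_i \bullet f)(p) = 0$, hence each $D_i \in D_P[I]$, and $D_P[I]$ is a $\kappa(P)$-subspace. For the reverse containment I would argue by contradiction on dimensions: suppose $\DD' := \spn_{\kappa(P)}\{D_i\} \subsetneq D_P[I]$. Taking orthogonal complements within $\kappa(P)[\xx]$ (and intersecting with $\KK[\xx]$), the bijection of \Cref{prop:bijection_non_rational_mmm} — which is inclusion-reversing — gives an ideal $J := (\DD')^\perp \cap \KK[\xx] \supsetneq D_P[I]^\perp \cap \KK[\xx] = I$, with $J$ being $P$-primary (or needing a small argument that $\DD'$, being spanned by Noetherian operators, is itself closed under the right $R$-action so that the bijection applies — in fact $\DD'$ need not be closed, so instead I would replace $\DD'$ by its right-$R$-module closure $\overline{\DD'}$, still contained in $D_P[I]$). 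Then $J \supsetneq I$, but by the forward direction of the Noetherian operator property applied contrapositively: there is $f \in J \setminus I$ with $N_i \bullet f \in P$ for all $i$ (since $f \in J = \overline{\DD'}^\perp \cap \KK[\xx]$ kills all $D_i$, hence $\langle D_i, f\rangle = 0$, hence $(N_i\bullet f)(p) = 0$, hence $N_i \bullet f \in P$), contradicting that $\{N_i\}$ is a Noetherian operator set for $I$.

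\textbf{Main obstacle.} The routine parts are the forward implications and the evaluation identity; the delicate point is handling non-closure under the right $R$-action of the mere $\kappa(P)$-span of the $D_i$ (resp. $N_i$), and making sure the duality/bijection of \Cref{prop:bijection_non_rational_mmm} is applied to an object in its domain. Passing to the right-$R$-module closure resolves this, but one must check the closure stays inside $D_P[I]$ (true, since $D_P[I]$ is itself a right $R$-module by the bimodule structure discussed after \Cref{def:local_dual_space}) and that taking closures does not change the orthogonal complement intersected with $\KK[\xx]$ — equivalently, that $\overline{\DD'}^\perp = (\DD')^\perp$, which holds because $\perp$ of any set equals $\perp$ of the $R$-submodule it generates. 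Beyond that, one small compatibility check is needed: that the $\perp$ appearing in \Cref{prop:bijection_non_rational_mmm} and the pairing $\langle\cdot,\cdot\rangle$ match up under the identification $D_{\mm_p}[\cdot] \cong \{D : D(f) = 0\}$, which is the content of the remark preceding \Cref{thm:mmm_bijection_rational_point} together with \Cref{lem:nonrational-to-rational}.
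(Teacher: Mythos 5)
Your overall route coincides with the paper's: reduce to the rational point $p$ over $\kappa(P)$ via \Cref{lem:nonrational-to-rational}, use the identity $\langle D_i, f\rangle_{\kappa(P)} = (N_i\bullet f) \bmod P$ to translate ``$N_i\bullet f\in P$'' into ``$\langle D_i,f\rangle=0$'', and invoke \Cref{prop:bijection_non_rational_mmm} for the reverse implications. Part (1) and the forward containment in part (2) are fine and match the paper. The gap is in your patch for the reverse containment in part (2). You rightly observe that $\mathcal D' := \spn_{\kappa(P)}\{D_i\}$ need not be closed under the right $R$-action (a point the paper's own proof passes over silently), but your repair rests on the claim that $\overline{\mathcal D'}^{\perp} = (\mathcal D')^{\perp}$ ``because $\perp$ of any set equals $\perp$ of the $R$-submodule it generates.'' That identity is false for the right action: since $\langle D\cdot f, g\rangle = \langle D, fg\rangle$, the space $\overline{\mathcal D'}^{\perp}$ is the largest \emph{ideal} contained in the $\KK$-subspace $(\mathcal D')^{\perp}$, and can be strictly smaller. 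Concretely, for $I=(x^2)\subseteq\KK[x]$ and $\mathcal D'=\spn\{\partial_x\}$ one has $\overline{\mathcal D'}=\spn\{1,\partial_x\}=D_P[I]$, while $1\in(\mathcal D')^{\perp}$ but $1\notin (x^2)=\overline{\mathcal D'}^{\perp}$. As a result your contradiction argument only disposes of the case $\overline{\mathcal D'}\subsetneq D_P[I]$; the case $\mathcal D'\subsetneq D_P[I]$ with $\overline{\mathcal D'}=D_P[I]$ yields no element of $\overline{\mathcal D'}^{\perp}\setminus I$ and is left unhandled.

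The missing case is closed by a dimension count rather than the closure trick. Let $J:=\{f\in R : \langle D_i,f\rangle=0 \ \forall i\}$; the hypothesis that $\{N_i\}_i$ is a set of Noetherian operators says exactly $J=I$. Choosing a $\kappa(P)$-basis $D_1,\dotsc,D_{m'}$ of $\mathcal D'$, the $\KK$-linear map $R/J\hookrightarrow \kappa(P)^{m'}$, $f\mapsto(\langle D_j,f\rangle)_j$, gives $\dim_\KK R/I=\dim_\KK R/J\le m'\,[\kappa(P):\KK]$. On the other hand, \Cref{thm:mmm_bijection_rational_point} applied over $\kappa(P)$ together with \Cref{prop:bijection_non_rational_mmm} (all conjugate primary components of $I_{\kappa(P)}$ have the same colength, and there are $[\kappa(P):\KK]$ of them) gives $\dim_\KK R/I=\dim_{\kappa(P)}D_P[I]\cdot[\kappa(P):\KK]$. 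Hence $m'\ge\dim_{\kappa(P)}D_P[I]$, so $\mathcal D'=D_P[I]$, with no appeal to closedness of $\mathcal D'$ at all.
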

\begin{proof}
    \begin{enumerate}
        \item For $f \in R$, one has $N_i \bullet f \in P \iff \langle D_i, f \rangle = 0$. If $f\in I$, then $\langle D_i, f \rangle = 0$, so $N_i \bullet f \in P$ for all $i$.
        Conversely, if $f \in R \setminus I$, then $f \not \in I_{\kappa(P)}$, so $\langle D_i, f \rangle \neq 0$ for some $i$.
        \item Let $D_i$ be the image of $N_i$ in $W_{\kappa(P)}$, and let $\mathcal{D} := \Span{D_i}$ be the $\kappa(P)$-span. 
        Then $\mathcal{D} \subseteq D_P[I]$, and if $\mathcal{D} \ne D_P[I]$, then $\mathcal{D}^\perp \supsetneq D_P[I]^\perp = I$ by \Cref{prop:bijection_non_rational_mmm}. 
        Then any $g \in \mathcal{D}^\perp \setminus I$ satisfies $N_i \bullet g \in P$ for all $i$, which is impossible since $\{N_i\}_i$ are Noetherian operators for $I$.\qedhere
    \end{enumerate}
\end{proof}

\begin{corollary} \label{cor:minimalNops}
 Let $P \subseteq R$ be a maximal ideal, and $I \subseteq R$ a $P$-primary ideal. 
 The set $\{N_i\}_i \subseteq W_R$ is a minimal set of Noetherian operators for $I$ if and only if the set of their images $\{D_i\}_i \subseteq W_{\kappa(P)}$ is a basis of $D_P[I]$.
 Conversely, the set $\{D_i\}_i \subseteq W_{\kappa(P)}$ is a basis of $D_P[I]$ if and only if any preimages $\{N_i\}_i \subseteq W_R$ is a minimal set of Noetherian operators.
\end{corollary}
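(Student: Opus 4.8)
The plan is to reduce everything to \Cref{thm:zero_dim_noeth_ops}, which already tells us that $\{N_i\}_i \subseteq W_R$ is a set of Noetherian operators for $I$ precisely when the images $\{D_i\}_i \subseteq W_{\kappa(P)}$ span $D_P[I]$ as a $\kappa(P)$-vector space. All that remains is to see that the word \emph{minimal} on the operator side corresponds exactly to \emph{$\kappa(P)$-linear independence} on the dual side; together with the spanning statement this identifies ``minimal set of Noetherian operators'' with ``basis of $D_P[I]$''. Throughout one uses that $D_P[I]$ is finite-dimensional over $\kappa(P)$, which holds since $P$ is maximal and $I$ is $P$-primary.

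For the first biconditional I would argue the forward direction contrapositively: if $\{N_i\}_i$ is a set of Noetherian operators whose images $\{D_i\}_i$ are $\kappa(P)$-linearly \emph{dependent}, pick a nontrivial dependence and solve for one $D_j$ in terms of the others; then $\{D_i\}_{i \ne j}$ still spans $D_P[I]$, so \Cref{thm:zero_dim_noeth_ops}(1) makes $\{N_i\}_{i \ne j}$ a set of Noetherian operators, and $\{N_i\}_i$ is not minimal. (The same move handles the degenerate case of two operators with equal images, since that is itself a dependence.) Conversely, if the images $\{D_i\}_i$ form a basis, then \Cref{thm:zero_dim_noeth_ops}(1) gives that $\{N_i\}_i$ is a set of Noetherian operators, and if some proper subfamily $\{N_i\}_{i \in S}$ were also a set of Noetherian operators, \Cref{thm:zero_dim_noeth_ops}(2) would force $\{D_i\}_{i \in S}$ to span $D_P[I]$ — impossible, since a proper subfamily of a basis spans only a proper subspace. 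Hence $\{N_i\}_i$ is minimal.

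The second biconditional follows from the same two implications applied to arbitrary preimages: given a basis $\{D_i\}_i$ of $D_P[I]$, any choice of preimages $\{N_i\}_i \subseteq W_R$ is a set of Noetherian operators with $\kappa(P)$-independent images, hence minimal by the argument above; conversely, if some (equivalently, any) preimages form a minimal set of Noetherian operators, then by the forward direction of the first biconditional their images — which are exactly the $D_i$ — are $\kappa(P)$-linearly independent, and, being a set of Noetherian operators, they span $D_P[I]$, so they form a basis.

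I do not anticipate a genuine obstacle: \Cref{thm:zero_dim_noeth_ops} carries all the content, and the only point requiring care is the bookkeeping between finite sets and indexed families — one must observe that a repeated operator, or two operators with equal image, is already a $\kappa(P)$-linear dependence and hence excluded as soon as the images are required to be linearly independent. I would state this remark explicitly so that the cardinality count $\#\{N_i\}_i = \dim_{\kappa(P)} D_P[I]$ underlying the minimality arguments is unambiguous.
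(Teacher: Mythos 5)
Your proof is correct and follows exactly the route the paper intends: the corollary is stated without proof as an immediate consequence of \Cref{thm:zero_dim_noeth_ops}, and your argument (linear dependence of images $\Leftrightarrow$ redundancy of operators, combined with the spanning equivalence from the theorem) supplies precisely the omitted details. The bookkeeping remark about repeated operators being a linear dependence is a reasonable point of care but does not change the substance.
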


\subsection{Positive dimensional primary ideals} \label{ssec:positive_dim}
Now suppose $I \subseteq R$ is a primary ideal of arbitrary dimension $d$.
Then there exists a set of $d$ variables in $R$ which is algebraically independent in $R/I$.
We refer to these as {\em independent variables} $\tt := \{t_1,\ldots,t_d\}$, the remaining variables as {\em dependent variables} $\xx := \{x_1,\ldots,x_{n-d}\}$, and write $R = \KK[\tt,\xx]$. 
Since we have two types of variables, we also write the Weyl algebra $W_R = R[\partial_{\tt},\partial_{\xx}] := R[\partial_{t_1},\dotsc,\partial_{t_d}, \partial_{x_1,\dotsc,\partial_{x_{n-d}}}]$, where $\partial_{x_i},\partial_{t_j}$ correspond respectively to $\frac{\partial}{\partial x_i}, \frac{\partial}{\partial t_j}$.
Note that after a generic linear change of coordinates, every subset of $d$ variables in $R$ is independent in $R/I$ -- this can avoid the step of computing an independent set of variables, at the cost of any structure present in the generators of $I$.
Set $U := \KK[\tt] \setminus \{0\}$, and $S := U^{-1}R = \KK(\tt)[\xx]$, the localization of $R$ at the multiplicative set $U$. 
Let $IS$ denote the extension of $I$ to $S$, which is the ideal of $S$ generated by the image of $I$ under the (injective) localization map $R \to S$. 
For any $f \in S$, there exists $u \in U$ such that $g := uf$ is in $R$ -- we call any such $g$ a \emph{lift} of $f$ in $R$, and extend this notion to the inclusion $W_R \hookrightarrow W_S$ in the natural way.
\begin{lemma} \label{lemma:zeroDimExtension}
	Let $\tt$ be a maximal independent set of variables for $I$, and $S = \KK(\tt)[\xx]$ as above. 
	Then
	\begin{enumerate}
		\item $\dim IS = 0$,
		\item $IS \cap R = I$,
		\item $\sqrt{IS} \cap R = \sqrt{I}$.
	\end{enumerate}
\end{lemma}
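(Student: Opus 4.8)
The plan is to prove the three statements in order, each building on the previous one, by exploiting the standard behavior of ideals under localization together with the fact that $\tt$ is a \emph{maximal} independent set.

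For part (1), I would argue that $\dim(S/IS) = 0$ because $S/IS = U^{-1}(R/I)$ is the localization of the $d$-dimensional domain-quotient $R/I$ at the image of $U = \KK[\tt]\setminus\{0\}$. Since $\tt$ is a maximal independent set modulo $I$, the subring $\KK[\tt] \hookrightarrow R/I$ is a polynomial ring over which $R/I$ is an integral extension after we invert the nonzero elements of $\KK[\tt]$ — more precisely, $U^{-1}(R/I)$ is a finitely generated module over the field $\KK(\tt) = U^{-1}\KK[\tt]$, by Noether normalization applied to $R/I$ over $\KK[\tt]$ (generic independence plus maximality gives that the dependent variables are integral over $\KK(\tt)$ modulo $IS$). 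A finitely generated algebra over a field that is module-finite has Krull dimension $0$, giving $\dim IS = 0$. I would also remark that $IS$ is still primary (localization of a primary ideal at a multiplicative set disjoint from its radical is primary), and $IS \neq S$ since $U \cap \sqrt I = \emptyset$ (as $\tt$ is independent modulo $I$, no nonzero element of $\KK[\tt]$ lies in $\sqrt I$).

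For part (2), the inclusion $I \subseteq IS \cap R$ is immediate. For the reverse inclusion, take $f \in IS \cap R$; then $uf \in I$ for some $u \in U$. Because $I$ is $P$-primary and $u \notin \sqrt I = P$ (again since $u$ is a nonzero polynomial in the independent variables), the definition of a primary ideal forces $f \in I$. Thus $IS \cap R = I$. Part (3) follows by the same contraction-of-primary-ideal reasoning applied to the prime $\sqrt I$: we have $\sqrt{IS} = (\sqrt I)S$ since localization commutes with taking radicals, and $(\sqrt I)S \cap R = \sqrt I$ because $\sqrt I = P$ is prime and disjoint from $U$ (a prime ideal meeting a multiplicative set $U$ would have to contain some nonzero polynomial in $\tt$, contradicting independence). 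Alternatively, one can deduce (3) directly from (2): $\sqrt{IS} \cap R = \sqrt{IS \cap R} = \sqrt I$, using that contraction commutes with radical for the inclusion $R \hookrightarrow S$.

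The only genuinely substantive point is part (1); parts (2) and (3) are routine once one observes that $U$ is disjoint from $\sqrt I$, which is exactly the statement that $\tt$ is independent modulo $I$. The main obstacle in (1) is making the Noether-normalization argument clean: one must be careful that maximality of the independent set is precisely what guarantees every dependent variable becomes algebraic over $\KK(\tt)$ in $S/IS$ — if some dependent $x_i$ were transcendental modulo $IS$, then $\tt \cup \{x_i\}$ would be a larger independent set modulo $I$, contradicting maximality. With that dimension-theoretic observation in hand, the rest is formal.
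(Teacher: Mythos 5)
Your proof is correct and follows essentially the same route as the paper: maximality of $\tt$ forces the dependent variables to be integral (algebraic) over $\KK(\tt)$ modulo $IS$, giving dimension zero, and parts (2) and (3) follow from the behavior of primary and prime ideals disjoint from the multiplicative set $U$ under localization, which you spell out where the paper simply cites Atiyah--MacDonald. (One cosmetic slip: $R/I$ need not be a domain since $I$ is only primary, but your argument never actually uses that.)
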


\begin{proof}
The algebraic independence of $\tt$ in $R/I$ means exactly that the universal map $\KK[\tt] \to R/I$ is injective. 
Since $\tt$ was maximal, none of the dependent variables $\xx$ are transcendental over $\KK[\tt]$, so localizing at $U$ gives an integral extension $\KK(\tt) \hookrightarrow U^{-1}(R/I) \cong S/IS$. 
Thus $\dim S/IS = 0$, which is (i). 
Then (ii) and (iii) follow from the fact that $I$ is primary with $I \cap U = \emptyset$ (so also $\sqrt{I} \cap U = \emptyset$), together with the 1-1 correspondence of primary (resp. prime) ideals in a localization, see \cite[Proposition 4.8(ii)]{atiyah1969introduction}.
\end{proof}

Since $IS$ is zero-dimensional, we can compute a $\kappa(PS)$-basis of $D_{PS}[IS]$ as in \Cref{sec:zero_dim_case}, and recover a set of Noetherian operators for $I$ from this basis:

\begin{proposition}\label{prop:pos_dim_noeth_ops}
	Let $I \subseteq R$ be a primary ideal of dimension $d$, $P = \sqrt I$, and $S = \KK(\tt)[\xx]$ where $\tt, \xx$ are independent resp. dependent variables for $I$.
	\begin{enumerate}
		\item If $\{D_i\}_i \subseteq W_S$ is a set of Noetherian operators for $IS$, then any lift $\{N_i\}_i \subseteq W_R$ of $\{D_i\}_i$ is a set of Noetherian operators for $I$.
		\item Conversely, if $\{N_i\}_i \subseteq W_R$ is a set of Noetherian operators of $I$ whose differential variables involve only $\partial_{\xx}$ (and not $\partial_{\tt}$), then their images $\{D_i\}_i \subseteq W_S$ is a set of Noetherian operators for $IS$.
	\end{enumerate}
\end{proposition}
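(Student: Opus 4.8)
The plan is to deduce each direction directly from the Noetherian‑operator property of the zero‑dimensional ideal $IS$ (resp.\ of $I$), using \Cref{lemma:zeroDimExtension} to pass between $R$ and $S$, together with two elementary observations about the $\bullet$‑action. First, for any $N \in W_R$, any $u \in R$, and any $f$, one has $(uN)\bullet f = u\,(N\bullet f)$, simply because in $W_R = R[\partial]$ the coefficient ring acts by multiplication after differentiation. Second --- and this is where the hypothesis in part (2) enters --- if $N \in W_R$ involves only the derivations $\partial_{\xx}$, then $N$ commutes with multiplication by any $u \in \KK[\tt]$, so $N \bullet (uf) = u\,(N \bullet f)$. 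I will also repeatedly use that every element of $U = \KK[\tt]\setminus\{0\}$ is a unit in $S$, and that for $f \in S$ one has $f \in IS$ if and only if $uf \in I$ for some denominator‑clearing $u \in U$, which follows from $IS \cap R = I$ together with the fact that $U$ consists of units in $S$.

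For part (1), I would write each lift as $N_i = u_i D_i$ in $W_S$ with $u_i \in U$. If $f \in I$, its image in $S$ lies in $IS$, so $D_i \bullet f \in \sqrt{IS}$ for every $i$; then $N_i \bullet f = u_i\,(D_i\bullet f) \in \sqrt{IS}$, and since $N_i\bullet f$ already lies in $R$, \Cref{lemma:zeroDimExtension} gives $N_i \bullet f \in \sqrt{IS}\cap R = \sqrt I$. Conversely, if $N_i \bullet f \in \sqrt I \subseteq \sqrt{IS}$ for all $i$, then $u_i\,(D_i \bullet f) \in \sqrt{IS}$, and dividing by the unit $u_i$ yields $D_i \bullet f \in \sqrt{IS}$ for all $i$; the Noetherian‑operator property of $\{D_i\}$ for $IS$ then forces $f \in IS$, hence $f \in IS \cap R = I$.

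For part (2), let $D_i$ denote the image of $N_i$ in $W_S$; each $D_i$ still involves only $\partial_{\xx}$. Given $f \in S$, pick $u \in U$ with $uf \in R$. The commutation identity gives $D_i \bullet (uf) = u\,(D_i\bullet f)$, and since $uf \in R$ this common value equals $N_i \bullet (uf) \in R$. If $f \in IS$ then $uf \in IS \cap R = I$, so $N_i \bullet(uf) \in \sqrt I \subseteq \sqrt{IS}$, and dividing by the unit $u$ gives $D_i\bullet f \in \sqrt{IS}$ for every $i$. Conversely, if $D_i \bullet f \in \sqrt{IS}$ for all $i$, then $N_i \bullet (uf) = u\,(D_i\bullet f) \in \sqrt{IS}\cap R = \sqrt I$ for all $i$, so $uf \in I$ because $\{N_i\}$ are Noetherian operators for $I$, and therefore $f = u^{-1}(uf) \in IS$.

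I do not expect a serious obstacle: the argument is essentially bookkeeping. The one point that needs genuine care is the asymmetry between the two commutation facts --- in part (1) the clearing factor $u_i$ multiplies the \emph{operator}, which requires no hypothesis, whereas in part (2) the factor $u$ multiplies the \emph{argument}, which works only because each $N_i$ uses no $\partial_{\tt}$ and $u \in \KK[\tt]$; conflating these would break the proof. One should also track carefully in which ring each $\bullet$‑computation is performed, invoking the injectivity of the localization map $R \hookrightarrow S$ when identifying $N_i \bullet (uf)$ computed in $R$ with $u\,(D_i \bullet f)$ computed in $S$.
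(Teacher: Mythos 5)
Your proof is correct and follows essentially the same route as the paper's: reduce to the Noetherian-operator property of the zero-dimensional ideal $IS$, pass between $R$ and $S$ via \Cref{lemma:zeroDimExtension}, and use that elements of $U$ are units in $S$ together with the commutation of $\partial_{\xx}$-only operators with multiplication by elements of $\KK[\tt]$ in part (2). Your explicit separation of the two commutation facts (clearing factor on the operator in (1) versus on the argument in (2)) is exactly the point the paper's proof relies on, stated more carefully.
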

\begin{proof}
\begin{enumerate}
	\item For $f \in S$, one has $N_i \bullet f \in R$ for all $i \iff f \in R$: $\Leftarrow$ follows since the $N_i$ have coefficients in $R$, and $\Rightarrow$ follows since $1 \in D_{PS}[IS]$ is in the span of $\{D_i\}_i$. 
	Then by \Cref{lemma:zeroDimExtension}, $f \in I = IS \cap R \iff N_i \bullet f \in PS \cap R$ for all $i \iff N_i \bullet f \in P$ for all $i$.

	\item We show that $f \in IS \iff N_i \bullet f \in PS$ for all $i$. 
	For the forward direction, let $f \in IS$. 
	Then $f = \frac{g}{u}$ for some $g \in I$, $u \in U$.
	For every $i$, we have that $\frac{1}{s}$ is a scalar with respect to $D_i$ (since $N_i$ involves only $\partial_{\xx}$), so $D_i \bullet f = \frac{N_i \bullet g}{u} \in PS$, since $N_i \bullet g \in P$.

	Conversely, suppose $f = \frac{g}{u} \in S$ ($g \in R$, $u \in U$) is such that $N_i \bullet f \in PS$ for all $i$. 
	Then $g = uf$ in $R$ (since $R \hookrightarrow S$ is injective), so $N_i \bullet g = N_i \bullet (uf) = u(N_i\bullet f) \in PS \cap R = P$ for all $i$. 
	Hence $g \in I$, and thus $f \in IS$. \qedhere
\end{enumerate}
\end{proof}

\subsection{Symbolic Algorithms} \label{sec:algs_and_exs}
In this subsection, we present algorithms to symbolically compute bases for local dual spaces, which yields Noetherian operators by \Cref{thm:zero_dim_noeth_ops} and \Cref{prop:pos_dim_noeth_ops}. 
The method is a straightforward adaptation of the classical theory of Macaulay inverse systems involving \emph{Macaulay matrices}.

As usual, we start with the zero-dimensional case. 
Let $I$ be a zero-dimensional ideal in $R = \KK[\xx]$ and $P$ a minimal prime of $I$. 
Given $D \in W_{\kappa(P)}$, we say the \emph{$\partial$-degree} of $D$ is $d$ if $D$ is a degree $d$ polynomial in the $\partial$-variables with coefficients in $\kappa(P)$. 
We define the \emph{degree $d$ truncated local dual spaces} as
\begin{align*}
	D_P^{(d)}[I]:= \{D \in D_P[I] \mid \text{$\partial$-degree of $D$ is $\le d$}\}.
\end{align*}

As in \Cref{sec:dual-spaces-for-nonrational-points} let $\kappa(P)$ be the residue field of $P$, $p \in \VV(P_{\kappa(P)})$ and $\mm_p$ the maximal ideal at $p$. 
By \Cref{lem:nonrational-to-rational} and \Cref{prop:bijection_non_rational_mmm}, 
\[ D_P[I] = D_{\mm_p}[I_{\kappa(P)}] = \{D \in \kappa(P)[\partial_p]: D(f) = 0 \text{ for all } f \in I_{\kappa(P)}\}. \]
Both $\{\partial^\alpha\}_{\alpha \in \ZZp^n}$ and $\{\partial_p^\alpha\}_{\alpha \in \ZZp^n}$ are bases for $\kappa(P)[\partial]$, and for $D \in \kappa(P)[\partial]$ the $\partial$-degree and $\partial_p$-degree of $D$ are equal.

Fix a $\partial$-degree $d$, and let $C := \{\partial^\beta \mid |\beta| \leq d\}$, the set of all $\partial$-monomials of $\partial$-degree at most $d$. 
Pick a generating set $\{f_1,\dotsc,f_r\}$ for $I$, and let $F := \{\xx^\alpha f_i \mid i = 1,\dotsc, r,\; |\alpha| < d\}$. 
For a fixed total ordering $\prec$ on $\partial$-monomials, we define the \emph{degree $d$ Macaulay matrix} $M$ of dimension $|F| \times |C|$, where the rows are indexed by $F$, and the columns are indexed by $C$ and ordered according to $\prec$. 
The entry corresponding to the row $\xx^\alpha f_i$ and column $\partial^\beta$ of the Macaulay matrix is the value with respect to the pairing \Cref{def:pairing}, i.e.
\begin{align*}
	M_{\alpha,i;\beta} = \langle \partial^\beta, \xx^\alpha f_i \rangle_{\kappa(P)} \in \kappa(P).
\end{align*}
Any $D = \sum_{|\beta|\leq d} v_\beta \partial^\beta \in W_{\kappa(P)}$ is specified by its coefficient (column) vector $v = (v_\beta)_\beta$. 
Every entry of $Mv$ is of the form $\langle D, g \rangle$ for some $g \in I$, so every element in the truncated local dual space $D_P^{(d)}[I]$ corresponds to a vector in the kernel of the Macaulay matrix. 
To show the reverse, we need the following:

\begin{lemma}\label{lem:truncated_dual_power}
  With notation as above,
  \begin{align*}
    D_P^{(d)}[I] = D_P[I + P^{d+1}].
  \end{align*}
\end{lemma}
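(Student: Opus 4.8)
The plan is to transfer the problem to the classical rational‑point setting over the residue field $\kappa(P)$, compute the local dual of a power of a maximal ideal by hand, and then intersect. By \Cref{lem:nonrational-to-rational} and \Cref{prop:bijection_non_rational_mmm}, fix $p \in \VV(P_{\kappa(P)})$; then for every $R$-ideal $J$ one has $D_P[J] = D_{\mm_p}[J_{\kappa(P)}]$, and the $\partial$-degree of an operator in $W_{\kappa(P)} = \kappa(P)[\partial] = \kappa(P)[\partial_p]$ equals its $\partial_p$-degree. Applying \Cref{prop:local_dual_properties}(2) and $(I + P^{d+1})_{\kappa(P)} = I_{\kappa(P)} + (P_{\kappa(P)})^{d+1}$, the claim reduces to showing
\[
  D_{\mm_p}\big[(P_{\kappa(P)})^{d+1}\big] \;=\; \spn{}_{\kappa(P)}\{\partial_p^\beta : |\beta| \le d\},
\]
since then $D_P[I+P^{d+1}] = D_P[I] \cap D_{\mm_p}[(P_{\kappa(P)})^{d+1}] = D_P^{(d)}[I]$.

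To prove this identity, note that $\KK$ has characteristic $0$, so $\kappa(P)/\KK$ is separable and $P_{\kappa(P)}$ is a radical zero‑dimensional ideal; hence it is a finite intersection $\bigcap_j \mm_j$ of pairwise comaximal maximal ideals, one of which is $\mm_p$. By comaximality, $(P_{\kappa(P)})^{d+1} = \prod_j \mm_j^{d+1} = \bigcap_j \mm_j^{d+1}$, so \Cref{prop:local_dual_properties}(3) gives $D_{\mm_p}[(P_{\kappa(P)})^{d+1}] = \sum_j D_{\mm_p}[\mm_j^{d+1}]$.

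The key point is that $D_{\mm_p}[\mm_j^{d+1}] = 0$ whenever $\mm_j \ne \mm_p$. Suppose $D = \sum_\beta v_\beta \partial_p^\beta \ne 0$ lies in it; choose $\beta_0$ with $v_{\beta_0} \ne 0$ of maximal weight $|\beta_0|$. Since $\mm_j$ and $\mm_p$ are comaximal there is $a \in \mm_j$ with $a(p) = 1$, so $a^{d+1}(\xx - p)^{\beta_0} \in \mm_j^{d+1}$, and writing $a^{d+1} = 1 + (\text{terms vanishing at } p)$ this element expands as $(\xx-p)^{\beta_0} + \sum_{\gamma > \beta_0} c_\gamma (\xx-p)^\gamma$. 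Pairing with $D$ and using $\langle \partial_p^\beta, (\xx-p)^\gamma\rangle_{\mm_p} = \beta!\,\delta_{\beta\gamma}$, only indices $\beta \ge \beta_0$ contribute, and those with $|\beta| > |\beta_0|$ have $v_\beta = 0$; hence $\langle D, a^{d+1}(\xx-p)^{\beta_0}\rangle_{\mm_p} = v_{\beta_0}\beta_0! \ne 0$, contradicting $D \in D_{\mm_p}[\mm_j^{d+1}]$. Thus $D_{\mm_p}[(P_{\kappa(P)})^{d+1}] = D_{\mm_p}[\mm_p^{d+1}]$, and the same monomial computation — $\mm_p^{d+1}$ is $\kappa(P)$-spanned by $(\xx-p)^\gamma$ with $|\gamma| \ge d+1$, and $\sum v_\beta \partial_p^\beta$ annihilates all of these iff $v_\beta = 0$ for $|\beta| \ge d+1$ — identifies $D_{\mm_p}[\mm_p^{d+1}]$ with the operators of $\partial_p$-degree $\le d$, finishing the proof.

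I expect the main obstacle to be the step $D_{\mm_p}[\mm_j^{d+1}] = 0$: it is exactly where the possible failure of $\kappa(P)$ to be algebraically closed (so that $P_{\kappa(P)}$ genuinely splits into several maximal ideals) must be dealt with, and where one has to keep the pairing bookkeeping in the $\partial_p$-basis straight. When $\VV(P_{\kappa(P)})$ is a single point this step is vacuous and the argument collapses to the elementary monomial computation.
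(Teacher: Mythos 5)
Your proof is correct and follows essentially the same route as the paper's: reduce to the rational point $\mm_p$ over $\kappa(P)$, identify $D_P[P^{d+1}]$ with $\spn_{\kappa(P)}\{\partial_p^\beta : |\beta|\le d\}$ via the monomial pairing computation $\langle \partial_p^\beta, (\xx-p)^\alpha\rangle = \beta!\,\delta_{\alpha\beta}$, and conclude with \Cref{prop:local_dual_properties}(2). The only difference is that where the paper disposes of the other maximal ideals lying over $P_{\kappa(P)}$ with a one-line appeal to localizing at $\mm_p$, you make this explicit by writing $(P_{\kappa(P)})^{d+1} = \bigcap_j \mm_j^{d+1}$ and proving $D_{\mm_p}[\mm_j^{d+1}] = 0$ for $\mm_j \neq \mm_p$ directly, which is a sound expansion of the paper's tersest step.
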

\begin{proof}
First we show that
\[ D_P[P^{d+1}] = D^{(d)}_P[0] = \spn_{\kappa(P)}\{\partial_p^\beta : |\beta| \leq d\}. \]
Indeed, since $P_{\kappa(P)}$ is a product of maximal ideals, localizing $P_{\kappa(P)}$ at $\mm_p$ gives $D_P[P^{d+1}] = D_{\mm_p}[\mm_p^{d+1}]$, by \Cref{lem:nonrational-to-rational}. 
If $|\alpha| > d$ and $|\beta| \leq d$, then $\langle \partial_p^\beta, (\xx-p)^\alpha\rangle = 0$, so $D^{(d)}_P[0] \subseteq D_{\mm_p}[\mm_p^{d+1}]$ (as $\mm_p^{d+1}$ is spanned over $\kappa(P)$ by $\{(\xx-p)^\alpha \mid |\alpha| > d\}$).
Conversely, if $D$ has $\partial_p$-degree $> d$ then it has a nonzero term $c_\alpha \partial_p^\alpha$ with $|\alpha| > d$. 
Then $\langle D, (\xx-p)^\alpha \rangle = \alpha ! c_\alpha \neq 0$, hence $D \notin D_{\mm_p}[\mm_p^{d+1}]$.
  
Applying \Cref{prop:local_dual_properties}(2) then yields
\[ D_P[I + P^{d+1}] = D_P[I] \cap D_P[P^{d+1}] = D_P[I] \cap D^{(d)}_P[0] = D_P^{(d)}[I]. \qedhere \]
\end{proof}

\begin{proposition}
	With notation as above, let $\{v^{(k)}\}_k$ be a basis of the kernel of the degree $d$ Macaulay matrix, and let $D_k := \sum_\beta v^{(k)}_\beta \partial^\beta$. 
	Then $\{D_k\}_k$ is a basis for the truncated local dual space $D^{(d)}_P[I]$.
\end{proposition}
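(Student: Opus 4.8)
The plan is to exhibit an isomorphism between $\ker M$ and $D^{(d)}_P[I]$ induced by the (injective) coordinatization map $\Phi\colon \kappa(P)^{|C|} \to W_{\kappa(P)}$, $\Phi(v) = \sum_{|\beta|\le d} v_\beta \partial^\beta$; since $\Phi$ is injective (the $\partial^\beta$ are $\kappa(P)$-linearly independent), any basis $\{v^{(k)}\}_k$ of $\ker M$ is then carried to a basis $\{D_k\}_k$ of $D^{(d)}_P[I]$. One inclusion, $D^{(d)}_P[I] \subseteq \Phi(\ker M)$, is the observation already recorded before the statement: for $D = \Phi(v) \in D^{(d)}_P[I]$ and any row index $\xx^\alpha f_i \in I$ one has $(Mv)_{\alpha,i} = \langle D, \xx^\alpha f_i \rangle_{\kappa(P)} = 0$, so $v \in \ker M$. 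Hence the content is the reverse inclusion $\Phi(\ker M) \subseteq D^{(d)}_P[I]$.

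Fix $v \in \ker M$ and put $D := \Phi(v)$. By \Cref{lem:truncated_dual_power} and \Cref{prop:local_dual_properties}(2) it suffices to check that $D$ lies in both $D_P[P^{d+1}]$ and $D_P[I]$. The first is immediate: $D$ has $\partial$-degree $\le d$, hence $\partial_p$-degree $\le d$, and by the computation in the proof of \Cref{lem:truncated_dual_power} this says exactly $D \in D^{(d)}_P[0] = D_P[P^{d+1}]$. For the second I pass to the rational point $p \in \VV(P_{\kappa(P)})$ of \Cref{prop:L_is_kappa_P}: by \Cref{lem:nonrational-to-rational} we have $D_P[I] = D_{\mm_p}[I_{\kappa(P)}]$, so it is enough to show $\langle D, g \rangle_{\kappa(\mm_p)} = 0$ for all $g \in I_{\kappa(P)}$. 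Since $I_{\kappa(P)}$ is spanned over $\kappa(P)$ by $\{\xx^\gamma f_i\}$ (over all multi-indices $\gamma$ and all $i$) and the pairing is $\kappa(P)$-linear in the second argument, we may take $g = \xx^\gamma f_i$.

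The crux is the following elementary reduction of such a $g$ modulo $\mm_p^{d+1}$. Expand $\xx^\gamma = \sum_{\delta \le \gamma} \binom{\gamma}{\delta} p^{\gamma - \delta}(\xx - p)^\delta$, with coefficients in $\kappa(P)$, so $\xx^\gamma f_i = \sum_\delta \binom{\gamma}{\delta} p^{\gamma - \delta}(\xx-p)^\delta f_i$, and split the sum at $|\delta| = d$. When $|\delta| \ge d$, the term satisfies $(\xx-p)^\delta f_i \in \mm_p^{|\delta|}\cdot\mm_p \subseteq \mm_p^{d+1}$, using $f_i \in I_{\kappa(P)} \subseteq \mm_p$, and is therefore annihilated by $D$ since $D \in D_{\mm_p}[\mm_p^{d+1}]$. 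When $|\delta| < d$, the polynomial $(\xx-p)^\delta$ has degree $< d$, so $(\xx-p)^\delta f_i$ is a $\kappa(P)$-linear combination of the row-indexing polynomials $\{\xx^\alpha f_i : |\alpha| < d\} \subseteq \KK[\xx]$; for each such $\alpha$, $\langle D, \xx^\alpha f_i \rangle_{\kappa(\mm_p)} = \langle D, \xx^\alpha f_i \rangle_{\kappa(P)} = (Mv)_{\alpha,i} = 0$ (the first equality because the two pairings agree on $\KK[\xx]$), whence $\langle D, (\xx-p)^\delta f_i \rangle_{\kappa(\mm_p)} = 0$ by $\kappa(P)$-linearity. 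Summing the two groups gives $\langle D, \xx^\gamma f_i\rangle_{\kappa(\mm_p)} = 0$, so $D \in D_{\mm_p}[I_{\kappa(P)}] = D_P[I]$. Combining, $D \in D_P[I] \cap D_P[P^{d+1}] = D_P[I + P^{d+1}] = D^{(d)}_P[I]$; hence $\Phi(\ker M) = D^{(d)}_P[I]$, and we conclude as above.

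I expect the principal difficulty to be organizational rather than substantive: one must track carefully over which base field each pairing is taken (the Macaulay matrix is defined over $\kappa(P)$ but its rows index elements of $\KK[\xx]$, whereas passing to $(\xx-p)$-coordinates forces genuine $\kappa(P)$-coefficients), and one must invoke \Cref{lem:nonrational-to-rational} and the internal computation of the proof of \Cref{lem:truncated_dual_power} at precisely the right moments. The one genuinely mathematical point — that high-order ideal multiples $(\xx-p)^\delta f_i$ lie in $\mm_p^{d+1}$ and so need not be listed among the rows — is exactly what lets the finite Macaulay matrix capture the a priori infinite system of linear conditions cutting out $D_P[I]$.
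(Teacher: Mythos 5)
Your proof is correct and follows essentially the same route as the paper's: the easy inclusion is the kernel condition already noted, and the reverse inclusion decomposes each element of $I_{\kappa(P)}$ into low-degree multiples $\xx^\alpha f_i$ with $|\alpha|<d$ (killed by the Macaulay matrix rows) plus terms $(\xx-p)^\delta f_i \in \mm_p^{d+1}$ (killed because $D$ has $\partial$-degree $\le d$), then invokes \Cref{lem:truncated_dual_power}. The only difference is expository: you verify the spanning decomposition explicitly via the binomial expansion, which the paper states without proof.
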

\begin{proof}
Let $D \in D^{(d)}_P[I]$. 
We can write $D = \sum_{|\beta|\leq d} v_\beta \partial^\beta$ for some vector $v = (v_\beta)_\beta$.
Clearly $v \in \ker M$, so $v = \sum_k c_k v^{(k)}$, which implies $D = \sum_k c_k D_k$.
	
Conversely, we must show that $D_k$ is in $D^{(d)}_P[I]$ for each $k$. 
The set 
\[ \{\xx^\alpha f_i \mid |\alpha| < d,\; i = 1,\ldots,r\} \cup \{(\xx-p)^\beta f_i \mid |\beta| \geq d, \; i = 1,\ldots,r\}\]
spans $I_{\kappa(P)}$. 
By construction, $\langle D_k, \xx^\alpha f_i \rangle = 0$ for all $|\alpha| < d$. 
Note that each $f_i$ vanishes at $p$, so $f_i \in \mm_p$. 
For each $j$, the term $x_j - p_j$ is also in $\mm_p$. 
If $|\beta| \geq d$ then $(\xx-p)^\beta f_i \in \mm_p^{d+1}$.
Since the $\partial$-degree of $D_k$ is at most $d$, $D_k \in D_{\mm_p}[\mm_p^{d+1}]$ by \Cref{lem:truncated_dual_power}. 
So $\langle D_k, (\xx-p)^\beta f_i \rangle = 0$. 
Therefore $D_k \in D_{\mm_p}[\mm_p^{d+1}] \cap D_P[I] = D^{(d)}_P[I]$.
\end{proof}
It is clear that $D_P^{(1)}[I] \subseteq D_P^{(2)}[I] \subseteq \dotsb$, and since the local dual space is finite dimensional, this chain will stabilize to $D_P[I]$ after a finite number of steps. 
Furthermore, as the $D_p^{(d)}[I]$ are closed under the right $R$-action, the chain stabilizes when $\dim_{\kappa(P)} D_P^{(d)}[I] = \dim_{\kappa(P)} D_P^{(d+1)}[I]$.
In view of \Cref{prop:noethOpsOfNonPrimary}, we thus arrive at \Cref{alg:symb_zero_dim}, which computes Noetherian operators for the $P$-primary component of $I$ via kernels of successively larger Macaulay matrices. 
The algorithm computes the local dual space, and then constructs Noetherian operators from a basis thereof, so the output Noetherian operators will depend on a choice of basis of the local dual space. 
In our Macaulay2 implementation, we always choose a basis in reduced column echelon form.

\begin{algorithm}
\caption{Compute Noetherian operators symbolically in dimension zero}
\begin{algorithmic}[1]
\Require $I = \ideal{f_1,\dotsc,f_r}$ a zero-dimensional ideal, $P$ a minimal prime of $I$, $\prec$ an ordering on monomials $\partial^\beta$
\Ensure A set of Noetherian operators for the $P$-primary component of $I$
\Procedure{NoetherianOperatorsZero}{$I, P$}
    \State $K \gets \emptyset$
    \State $d \gets 0$ \Comment{$d$ corresponds to the degree bound}
    \Repeat
        \State $d \gets d+1$
        \State $F \gets $ vector with entries $\xx^\alpha f_i$, where $|\alpha| < d$, $i = 1,2,\dotsc,r$
        \State $C \gets $ vector with entries $\partial^\beta = \partial_{x_1}^{\beta_1}\dotsb\partial_{x_{n}}^{\beta_{n}}$, where $|\beta| \leq d$, in the order given by $\prec$
        \State $M \gets $ the Macaulay matrix with entries $\langle \partial^\beta, \xx^\alpha f_i \rangle_{\kappa(P)}$ (rows indexed by $F$, columns by $C$)
        \State $K_b \gets \ker M$
    \Until{$\dim K_b = \dim K_{b-1}$} \Comment{Stop when the dimension of the kernel stabilizes}
    \State $K \gets $ \Call{ColReduce}{$K_b$} \Comment{Rewrites the generators of $K_b$ in a reduced column echelon form}\label{line:symb_col_reduce}
    \State \textbf{return } preimage of $C^TK$ in $W_R$
\EndProcedure
\end{algorithmic}
\label{alg:symb_zero_dim}
\end{algorithm}

For the general case, if $I$ is positive-dimensional, we can use \Cref{prop:pos_dim_noeth_ops} to reduce to the zero dimensional case, yielding \Cref{alg:symb_pos_dim}.

\begin{algorithm}
\caption{Compute Noetherian operators symbolically in positive dimension}
\begin{algorithmic}[1]
\Require $I \subseteq \KK[\tt,\xx]$ an ideal, where $\tt,\xx$ are independent and dependent variables for $I$ respectively, $P$ a minimal prime of $I$, $\prec$ an ordering on monomials $\partial_{\xx}^\beta$
\Ensure A set of Noetherian operators for the $P$-primary component of $I$
\Procedure{NoetherianOperators}{$I, P$}
	\State $S \gets \KK(\tt)[\xx]$
    \State $K \gets \Call{NoetherianOperatorsZero}{IS, PS}$
    \State \textbf{return } lift of $K$ in $W_R$
\EndProcedure
\end{algorithmic}
\label{alg:symb_pos_dim}
\end{algorithm}

\begin{example}
Consider the 1-dimensional primary ideal $Q = ( (x_1^2 - x_3)^2, x_2 - x_3(x_1^2 - x_3) ) \subseteq R = \QQ[x_1,x_2,x_3]$. 
Its radical is $P = (x_1^2 - x_3, x_2)$, and we may choose $x_1,x_2$ as the dependent variables and $x_3$ as the independent variable. 
Thus in $S = \QQ(x_3)[x_1,x_2]$, $QS$ is a zero-dimensional primary ideal whose radical is $PS$. 
In degree 1, the Macaulay matrix has a 2-dimensional kernel.
In degree 2, the Macaulay matrix is

\begin{align*}
M = \kbordermatrix{
& 1 & \partial_{x_1} & \partial_{x_2} & \partial_{x_1}^2 & \partial_{x_1}\partial_{x_2} & \partial_{x_2}^2\\
(x_1^2-x_3)^2 & 0 & 0 & 0 & 8\,{x}_{3} & 0 & 0\\
(x_2 - x_3(x_1^2-x_3)) & 0 & {-2\,{x}_{3}{x}_{1}} & 1 & {-2\,{x}_{3}} & 0 & 0\\
x_1(x_1^2-x_3)^2 & 0 & 0 & 0 & 8\,{x}_{3}{x}_{1} & 0 & 0\\
x_1(x_2 - x_3(x_1^2-x_3)) & 0 & {-2\,{x}_{3}^{2}} & {x}_{1} & {-6\,{x}_{3}{x}_{1}} & 1 & 0\\
x_2(x_1^2-x_3)^2 & 0 & 0 & 0 & 0 & 0 & 0\\
x_2(x_2 - x_3(x_1^2-x_3)) & 0 & 0 & 0 & 0 & {-2\,{x}_{3}{x}_{1}}&2
}
\end{align*}
with entries in $S/PS$. 
Performing linear algebra in the field $S/PS$, we see that the kernel of $M$ is generated by $(1,0,0,0,0,0)^T$ and $(0,1,2x_1x_3,0,0,0)^T$. 
Since the dimension of the kernel did not increase, we terminate the loop in \Cref{alg:symb_zero_dim} and conclude that $\{1, \partial_{x_1} + 2x_1x_3 \partial_{x_2}\}$ is a set of Noetherian operators for $Q$.

Contrary to the algorithm in \cite{yairon_roser_bernd}, our algorithm does not go through the \emph{punctual Hilbert scheme}. 
To make this clear, we perform a parallel computation following \cite[Alg. 3.8]{yairon_roser_bernd}. 
Let $\FF$ denote the field of fractions of the integral domain $R/P$. 
The point in the punctual Hilbert scheme corresponding to $Q$ is the ideal
\begin{align*}
I = \langle y_1, y_2 \rangle^2 + \gamma(Q) \cdot \FF[y_1,y_2],
\end{align*}
where $\gamma$ is the inclusion map
\begin{align*}
\gamma \colon R \hookrightarrow \FF[y_1,y_2], ~~ \begin{array}{l}
x_1 \mapsto y_1 + x_1\\
x_2 \mapsto y_2 + x_2\\
x_3 \mapsto x_3\\
\end{array}
\end{align*}
Here $I = (y_1 - 1/(2x_1x_3)y_2, y_2^2)$. A basis for $I^\perp$ can be computed using e.g. the classical Macaulay matrix method. The degree 2 Macaulay matrix is
\begin{align*}
\kbordermatrix{
& 1 & \partial_{x_1} & \partial_{x_2} & \partial_{x_1}^2 & \partial_{x_1}\partial_{x_2} & \partial_{x_2}^2\\
(y_1 - 1/(2x_1x_3)y_2) & 0 & 1 & \frac{{-1}}{2\,{x}_{1}{x}_{3}} & 0 & 0 & 0\\
y_2^2 & 0 & 0 & 0 & 0 & 0 & 2\\
y_1(y_1 - 1/(2x_1x_3)y_2) & 0 & 0 & 0 & 2 & \frac{{-1}}{2\,{x}_{1}{x}_{3}} & 0\\
y_1y_2^2 & 0 & 0 & 0 & 0 & 0 & 0\\
y_2(y_1 - 1/(2x_1x_3)y_2) & 0 & 0 & 0 & 0 & 1 & \frac{{-1}}{{x}_{1}{x}_{3}}\\
y_2y_2^2 & 0 & 0 & 0 & 0 & 0 & 0
},
\end{align*}
with entries in $\FF$, and, as expected, its kernel corresponds to the Noetherian operators $\{1, \partial_{x_1} + 2x_1x_3 \partial_{x_2}\}$.
\end{example}

\section{A numerical approach via interpolation} \label{sec:numerical_approach}
Keeping notation from \Cref{ssec:positive_dim}, let $I \subseteq \KK[\tt,\xx]$ be a primary ideal of dimension $d$, where $\tt$ and $\xx$ are sets of independent and dependent variables for $I$ respectively. 
Let $\{N_1, \ldots, N_m\}$ be a set of Noetherian operators for $I$ as in \Cref{prop:pos_dim_noeth_ops}, and write
\begin{align*}
	N_i := \sum_\alpha f_{\alpha,i}(\tt,\xx) \partial_{\xx}^{\alpha}.
\end{align*}
Fix a point $(\tt_0,\xx_0) \in \VV(I)$ on the variety of $I$. We denote by $N_i(\tt_0,\xx_0)$ the \emph{specialized Noetherian operator}
\begin{align*}
	N_i(\tt_0,\xx_0) = \sum_\alpha f_{\alpha,i}(\tt_0,\xx_0) \partial_{\xx}^{\alpha} \in \KK[\partial_{\xx}].
\end{align*}
\begin{theorem} \label{thm:numerical_noetherian_operators}
	Assume $\KK = \cl \KK$. Let $\{N_1,\dotsc,N_m\}$ be a minimal set of Noetherian operators of a primary ideal $I$, and let $(\xx_0,\tt_0) \in \VV(I)$. If $\tt_0$ is general, then
	\begin{align*}
		\spn_\KK \{N_1(\tt_0,\xx_0), \dotsc, N_m(\tt_0,\xx_0)\} = D_{\mm_{(\tt_0,\xx_0)}}[I + (\tt-\tt_0)].
	\end{align*}
\end{theorem}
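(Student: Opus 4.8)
The plan is to establish the inclusion $\spn_\KK\{N_1(\tt_0,\xx_0),\dots,N_m(\tt_0,\xx_0)\}\subseteq D_{\mm_{(\tt_0,\xx_0)}}[I+(\tt-\tt_0)]$ unconditionally, and then to show that for general $\tt_0$ both sides have $\KK$-dimension $m$. Write $\mm:=\mm_{(\tt_0,\xx_0)}$, $J:=I+(\tt-\tt_0)$, $P:=\sqrt I$, $\FF:=\kappa(PS)=\operatorname{Frac}(R/P)$. By \Cref{prop:pos_dim_noeth_ops} and \Cref{cor:minimalNops} (via \Cref{thm:zero_dim_noeth_ops}), the images $D_i\in W_S$ of the $N_i$ form an $\FF$-basis of $D_{PS}[IS]$; in particular $m=\dim_\FF D_{PS}[IS]$, which by \Cref{lem:nonrational-to-rational}, \Cref{prop:bijection_non_rational_mmm} and \Cref{thm:mmm_bijection_rational_point} equals $\operatorname{length}(R_P/IR_P)=e(I)/e(\sqrt I)$, the multiplicity of $I$ over $P$ (this justifies the assertion in \Cref{sec:NOsets} that $m$ is this multiplicity). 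For the easy inclusion: since $N_i=\sum_\alpha f_{\alpha,i}\partial_\xx^\alpha$ involves no $\partial_\tt$, multiplication by each $t_j-t_{0,j}$ commutes past $N_i$, so writing $f=g+\sum_j h_j(t_j-t_{0,j})\in J$ with $g\in I$ gives $N_i\bullet f=N_i\bullet g+\sum_j(t_j-t_{0,j})(N_i\bullet h_j)$; evaluating at $(\tt_0,\xx_0)$ and using $N_i\bullet g\in P$ together with $(\tt_0,\xx_0)\in\VV(P)$ yields $\langle N_i(\tt_0,\xx_0),f\rangle_\KK=(N_i\bullet f)(\tt_0,\xx_0)=0$. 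Hence $N_i(\tt_0,\xx_0)\in D_\mm[J]$ for every $\tt_0$.

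Next I would bound the left-hand side from below. The coefficients $\bar f_{\alpha,i}:=f_{\alpha,i}\bmod P\in R/P$ form a matrix whose rank over $\FF$ equals $\dim_\FF D_{PS}[IS]=m$ (the $D_i$, i.e.\ their images mod $PS$, being $\FF$-linearly independent), so some $m\times m$ minor $\Delta$ is a nonzero element of the domain $R/P$. The locus $\{\Delta=0\}$, together with the loci where the further genericity hypotheses below fail, is a proper closed subset $Z\subsetneq\VV(P)$; since $\VV(P)$ is irreducible of dimension exactly $d$, $\dim Z<d$, so under the dominant, generically finite projection $\pi\colon\VV(P)\to\mathbb A^d_\tt$ the closure $\overline{\pi(Z)}$ has dimension $<d$ and is a proper closed subset of $\mathbb A^d$. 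For $\tt_0\notin\overline{\pi(Z)}$ lying in the (dense open) locus over which $\pi$ has finite fibers, every point $(\tt_0,\xx_0)$ of $\VV(J)$ avoids $Z$; in particular $\Delta(\tt_0,\xx_0)\ne 0$, so the $N_i(\tt_0,\xx_0)$ are $\KK$-linearly independent and $\dim_\KK\spn_\KK\{N_i(\tt_0,\xx_0)\}=m$.

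For the right-hand side, over such a general $\tt_0$ the fiber $\VV(J)$ is finite, so $\mm$ is a minimal prime of $J$; letting $Q$ be the $\mm$-primary component of $J$, \Cref{prop:noethOpsOfNonPrimary} gives $D_\mm[J]=D_\mm[Q]$, and since $\KK=\cl\KK$ makes $\mm$ rational, \Cref{thm:mmm_bijection_rational_point} gives $\dim_\KK D_\mm[Q]=\dim_\KK R/Q=\operatorname{length}(R_\mm/JR_\mm)$. I would fold into $Z$ two more proper closed bad loci on $\VV(P)$: that $(R/I)_\mm$ is Cohen--Macaulay (the non-CM locus is proper closed because $(R/I)_P=S/IS$ is Artinian, hence CM), and that $\pi$ is étale at $\mm$ (generic smoothness, $\operatorname{char}\KK=0$), so that $(\tt-\tt_0)$ generates the maximal ideal of the regular local ring $(R/P)_\mm$. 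The associativity formula for Hilbert--Samuel multiplicities, applied to the parameter ideal $(\tt-\tt_0)$ on the Cohen--Macaulay module $(R/I)_\mm$ whose unique minimal prime (as $\sqrt I=P$) is $P$, then gives
\begin{align*}
\dim_\KK D_\mm[J]=\operatorname{length}(R_\mm/JR_\mm)=e\big((\tt-\tt_0);(R/I)_\mm\big)=\operatorname{length}\big((R/I)_P\big)\cdot e\big((\tt-\tt_0);(R/P)_\mm\big)=m\cdot 1=m.
\end{align*}
Combining this with the inclusion from the first paragraph and with $\dim_\KK\spn_\KK\{N_i(\tt_0,\xx_0)\}=m$ forces equality.

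The main obstacle I expect is the bookkeeping of genericity: each ingredient (full rank of the coefficient matrix, finiteness and étaleness of the fiber over $\tt_0$, Cohen--Macaulayness of $(R/I)_\mm$) is a priori only generic on $\VV(P)$, whereas the statement demands the conclusion for \emph{every} point of the fiber over a general $\tt_0$; the resolution hinges on $\VV(P)$ being irreducible of dimension exactly $d$, so that every bad locus has dimension $<d$ and its image cannot dominate $\mathbb A^d$. The second delicate point is the identity $\operatorname{length}(R_\mm/JR_\mm)=m$, which combines the Cohen--Macaulay hypothesis (to replace colength by multiplicity), the associativity formula, and the identification $\operatorname{length}(R_P/IR_P)=m$ coming from the correspondence theorems of \Cref{sec:zero_dim_case}.
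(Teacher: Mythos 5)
Your proof is correct and follows essentially the same strategy as the paper's: show the specialized operators lie in $D_{\mm_{(\tt_0,\xx_0)}}[I+(\tt-\tt_0)]$, that they are $\KK$-linearly independent for general $\tt_0$, and that the right-hand side has $\KK$-dimension $m$ via a Hilbert--Samuel multiplicity computation for the parameter ideal $(\tt-\tt_0)$. The differences are in packaging rather than substance: the paper routes the inclusion through the identity $D_{\mm}[(\tt-\tt_0)]=\KK[\partial_{\xx}]$ and invokes Bertini together with \cite[Ex.~12.11]{eisenbud2013commutative} where you use generic Cohen--Macaulayness, generic \'etaleness, and the associativity formula, while your minor/bad-locus argument supplies details for the linear-independence and ``every point of the fiber'' claims that the paper states without proof.
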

\begin{proof}
	We first show that $D_{\mm_{(\tt_0,\xx_0)}}[(\tt-\tt_0)] = \KK[\partial_{\xx}]$. 
	The inclusion $\supseteq$ is clear. 
	For the opposite inclusion, we first note that every element $D \in D_{\mm_{(\tt_0,\xx_0)}}[(\tt-\tt_0)]$ can be written in the form
	\begin{align*}
		D = \sum_{\alpha,\beta} c_{\alpha,\beta} \partial_{\xx}^\alpha \partial_{\tt}^\beta,
	\end{align*}
	where $c_{\alpha,\beta} \in \KK$, $\alpha \in \NN^{n-d}$, $\beta \in \NN^d$, and only finitely many of the $c_{\alpha,\beta}$ are nonzero.
	We need to show that for all $\beta$ such that $\beta_1+\dotsb+\beta_d > 0$ we have $c_{\alpha,\beta} = 0$. 
	Assume this is not the case. 
	Since the local dual space is closed under the right R-action, we can repeatedly act on $D$ from the right with elements of the form $(t_i - (\tt_0)_i)$ and $(x_i - (\xx_0)_i)$ to obtain an operator $D' \in D_{\mm_{(\tt_0,\xx_0)}}[(\tt-\tt_0)]$ that has degree 1 in $\partial_{\tt}$-variables and degree 0 in $\partial_{\xx}$-variables. 
	More precisely, we get an operator
	\begin{align*}
		D' = c_0 + \sum_{i=1}^d c_i \partial_{t_i},
	\end{align*}
	where $c_j \in \KK$, $j = 0,\dotsc,d$, and $c_i \neq 0$ for at least one $i = 1,\dotsc,d$. 
	In this case however, we have
	\begin{align*}
		\langle D', (t_i - (\tt_0)_i) \rangle = c_i \neq 0,
	\end{align*}
	which is a contradiction.

	With this, \Cref{prop:local_dual_properties} yields that \[D_{\mm_{(\tt_0,\xx_0)}}[I + (\tt-\tt_0)] = D_{\mm_{(\tt_0,\xx_0)}}[I] \cap D_{\mm_{(\tt_0,\xx_0)}} [(\tt-\tt_0)] = D_{\mm_{(\tt_0,\xx_0)}}[I] \cap \KK[\partial_{\xx}].\]
	
    Since $\tt_0 \in \KK^d$ is general, the specializations $\{N_1(\tt_0,\xx_0), \dotsc, N_m(\tt_0,\xx_0)\}$ are $\KK$-linearly independent in $D_{\mm_{(\tt_0,\xx_0)}}[I + (\tt-\tt_0)]$. 
    Thus, to prove the theorem, it suffices to show that $\dim_\KK D_{\mm_{(\tt_0,\xx_0)}}[I + (\tt-\tt_0)] = m$, where $m = m(I,P)$ is the multiplicity of $I$ over $P$.
	
	Set $R_0 := R_{\mm_{(\tt_0,\xx_0)}}$, the localization of $R$ at the maximal ideal $\mm_{(\tt_0,\xx_0)}$, $I_0 := IR_0$, $P_0 := PR_0$, and $$J_0 := (I+(\tt-\tt_0))R_0 = I_0 + (\tt-\tt_0)R_0,$$ which is primary to the maximal ideal in $R_0$. 
	Then $$\dim D_{\mm_{(\tt_0,\xx_0)}}[I+(\tt-\tt_0)] = \dim D_{\mm_{(\tt_0,\xx_0)}}[J_0] = \dim_\KK R_0/J_0$$ by \Cref{thm:mmm_bijection_rational_point}. 
	On the other hand, $(\tt - \tt_0)R_0$ is a parameter ideal for $R_0/I_0$ and $R_0/P_0$. 
	By generality of $\tt_0$ again, Bertini's Theorem gives that $\tt - \tt_0$ forms a regular sequence on $R_0/I_0$, and $P_0 + (\tt-\tt_0)$ is radical, which implies $P_0+(\tt-\tt_0) = m_{(\tt_0,\xx_0)}$. 
	Thus, for general $\tt_0$, \cite[Exercise 12.11(d),(e)]{eisenbud2013commutative} implies that $$m(I,P) = m(I_0,P_0) = \dfrac{e((\tt-\tt_0), R_0/I_0)}{e((\tt-\tt_0), R_0/P_0)} = e((\tt-\tt_0), R_0/I_0) = \dim_\KK R_0/J_0$$ as desired (here $e(\mathfrak{q}, M)$ is the Hilbert-Samuel multiplicity of the parameter ideal $\mathfrak{q}$ on a module $M$). 
\end{proof}

Using the above result we obtain a numerical algorithm that computes Noetherian operators specialized at points, described in \Cref{alg:num_noeth_nops_at_point}. 
This algorithm is very similar to the symbolic algorithm for computing Noetherian operators, the only difference being that the Macaulay matrix is evaluated at a point.
The column reduction in step \ref{alg:numNoethOpsColReduce} is used to construct a basis consistent with the one computed in the symbolic algorithm. 
More precisely, for a fixed ordering $\prec$, the numerical matrix $K(p)$ in \Cref{alg:num_noeth_nops_at_point} is precisely the symbolic matrix $K$ in \Cref{alg:symb_zero_dim} evaluated at the point $p$. 
Thus if the output of \textsc{NoetherianOperators}$(I, P)$ is $\{N_1(\tt,\xx),\dotsc,N_m(\tt,\xx)\}$, then the output of \textsc{NoetherianOperatorsAtPoint}$(I, (\tt_0,\xx_0))$ will be $\{N_1(\tt_0,\xx_0),\dotsc,N_m(\tt_0,\xx_0)\}$. 
In general, \Cref{alg:num_noeth_nops_at_point} will be faster than \Cref{alg:symb_pos_dim}, as computations in the former are done in the base field $\kappa(\mm_{(\tt_0,\xx_0)}) = \KK$ rather than in $\kappa(P)$, which is an extension of the rational function field in $\tt$.

\begin{algorithm}
\caption{Compute specializations of Noetherian operators at a point}
\begin{algorithmic}[1]
\Require $I \subseteq \KK[\tt,\xx]$ an ideal, where $\tt,\xx$ are independent and dependent variables for $I$ respectively, $P$ a minimal prime of $I$, $\prec$ an ordering on monomials $\partial_{\xx}^\gamma$, and $p \in \B V(P)$
\Ensure A set of Noetherian operators for the $P$-primary component of $I$, specialized at $p$
\Procedure{NoetherianOperatorsAtPoint}{$I, p$}
    \State $K \gets \emptyset$
    \State $d \gets 0$ \Comment{$d$ corresponds to the degree bound}
    \Repeat
        \State $d \gets d+1$
        \State $F \gets $ vector with entries $\xx^\alpha \tt^\beta f_i$, where $|\alpha + \beta| < d$, $i = 1,2,\dotsc,r$
        \State $C \gets $ vector with entries $\partial^\gamma_{\xx}$, where $|\gamma| \leq d$, in the order given by $\prec$
        \State $M \gets $ the Macaulay matrix with entries $(\partial^\gamma_{\xx} \bullet (\xx^\alpha \tt^\beta f_i))(p)$ (rows indexed by $F$, columns by $C$)
        \State $K_b \gets \ker M$
    \Until{$\dim K_b = \dim K_{b-1}$} \Comment{Stop when the dimension of the kernel stabilizes}
    \State $K(p) \gets $ \Call{ColReduce}{$K_b$} \Comment{Rewrites generators of $K_b$ in reduced column echelon form} \label{alg:numNoethOpsColReduce}
    \State \textbf{return } $C^TK(p)$
\EndProcedure
\end{algorithmic}
\label{alg:num_noeth_nops_at_point}
\end{algorithm}

\begin{example}\label{ex:interpolation}
Let $I = \ideal{ x^2 - ty, y^2 }$ be an ideal in $\B C[t,x,y]$. 
Here $t$ is an independent variable, and $x,y$ are dependent.
We sample four points $(1,0,0), (2,0,0), (3,0,0), (4,0,0)$ on the variety $\B V(I)$. 
Running \Cref{alg:num_noeth_nops_at_point} gives four differential operators with constant coefficients for each point, shown in \Cref{tbl:interpolationExample}. 
	
\begin{table}[ht]
\caption{Specialized Noetherian operators at different points}
\centering
\begin{tabular}{ccccc}
\toprule
$(t,x,y)$ &  Operator 1 & Operator 2 & Operator 3 & Operator 4\\
\midrule
$(1,0,0)$ & $1$ & $\partial_x$ & $\partial_x^2 + 2 \partial_y$ & $\partial_x^3 + 6 \partial_x \partial_y$\\
$(2,0,0)$ & $1$ & $\partial_x$ & $\partial_x^2 + \partial_y$ & $\partial_x^3 + 3 \partial_x \partial_y$\\
$(3,0,0)$ & $1$ & $\partial_x$ & $\partial_x^2 + \frac{2}{3} \partial_y$ & $\partial_x^3 + 2 \partial_x \partial_y$\\
$(4,0,0)$ & $1$ & $\partial_x$ & $\partial_x^2 + \frac{1}{2} \partial_y$ & $\partial_x^3 + \frac{3}{2} \partial_x \partial_y$\\
\bottomrule
\end{tabular}
\label{tbl:interpolationExample}
\end{table}
	
Interpolating each coefficient, we conclude that the coefficient of $\partial_y$ in the third operator can be chosen to be $\frac{2}{t}$, and the coefficient of $\partial_x\partial_y$ in the fourth one can be chosen to be $\frac{6}{t}$. 
Hence we get a set of four Noetherian operators
\begin{align*}
1, \quad \partial_x, \quad \partial_x^2 + \frac{2}{t}\partial_y, \quad \partial_x^3 + \frac{6}{t}\partial_x \partial_y,
\end{align*}
which can be confirmed by symbolically computing Noetherian operators using \Cref{alg:symb_pos_dim}.
\end{example}

\subsection{Reconstructing a \setOfNOs{} from sampled points}

Given an ideal $I$ and an oracle for sampling points on an isolated component $V$ of $\B V(I)$, we seek to produce a set of Noetherian operators describing the primary ideal $Q$ corresponding to $V$. 
One way to supply such an oracle is via numerical irreducible decomposition \cite{sommese2001numerical} to construct a \emph{witness set} for each isolated component. 
The witness set for $V$ can then be used to sample points on $V$, as described in \cite{Sommese-Wampler-Verschelde-intro}.

Another instance in which such an oracle can be obtained is when the variety $V$ of interest is expressed as the image of a known rational map from another variety $W$ for which one has a witness set, $\varphi: W \dashedRightArrow V$ (cf. \emph{pseudo-witness set} from \cite{hauenstein2010witness}).
In this case points sampled from $W$ can be mapped forward to points on $V$.
In particular when $W = \KK^m$, sampling points on $\KK^m$, and therefore on $V$, is trivial.

As in \Cref{alg:symb_pos_dim}, let $I \subseteq \KK[\tt,\xx]$ be $P$-primary, $S = \KK(\tt)[\xx]$, and $K$ a basis for the kernel of the Macaulay matrix over $S$ in \Cref{alg:symb_zero_dim}. 
The entries of $K$ are coefficients of elements in $D_{PS}[IS]$, which live in the residue field $\kappa(PS) = S/PS$, and are represented by polynomials in $\xx$ with coefficients which are rational functions in $\tt$. 
On the other hand, entries of $K(p)$ in \Cref{alg:num_noeth_nops_at_point} are evaluations of the aforementioned rational functions at a point $p$, and live in $\KK$. 
We now seek to recover $K$ from a sampled set of evaluations $K(p_1),\dotsc,K(p_\ell)$, via interpolation of rational functions.
The interpolation procedure is described as follows: we wish to find a rational function $\frac{f(\tt,\xx)}{g(\tt,\xx)}$ such that $f(p_i) / g(p_i) = c_i$ for all $i = 1,\dotsc,\ell$. 
Choose an ansatz for $f,g$ of the form $f = \sum_{(\alpha,\beta) \in A} f_{\alpha,\beta} \tt^\alpha \xx^\beta$, and $g = \sum_{(\alpha,\beta) \in B} g_{\alpha,\beta} \tt^\alpha \xx^\beta$, where $A,B \subseteq \B Z_{\geq 0}^n$, with the $f_{\alpha,\beta}, g_{\alpha,\beta}$ to be determined. 
Then for each point $p_i$ we get a linear equation
\begin{align} \label{eqn:rational_matrix}
\sum_{(\alpha,\beta) \in A} f_{\alpha,\beta} p_i^{(\alpha,\beta)} - c_i \sum_{(\alpha,\beta) \in B} g_{\alpha,\beta} p_i^{(\alpha,\beta)} = 0,
\end{align}
where $p_i^{(\alpha,\beta)}$ is the monomial $\tt^\alpha \xx^\beta$ evaluated at the point $p_i$. 
Since in \eqref{eqn:rational_matrix} we are solving $f(p_i) - c_i g(p_i) = 0$, a possible solution obtained from the algorithm may correspond to a rational function $f/g$ where both $f,g \in \sqrt I$. 
For this reason, we remove the solutions where the numerator or the denominator vanishes on a generic point in $\B V(I)$. This method is described in \Cref{alg:rationalInterpolation}.

\begin{algorithm}
  \caption{Multivariate rational function interpolation}
    \begin{algorithmic}[1]
    \Require A sequence of points $p = (p_i)$ and values $v = (v_i)$; row vectors $\vec n, \vec d$ specifying the monomials appearing in the numerator and denominator
    \Ensure A rational function $f/g$ such that $\frac{f(p_i)}{g(p_i)} = v_i$ for all $i$, and where $f$ and $g$ have  monomial support in $\vec n$ and $\vec d$ respectively
    \Procedure{RationalInterpolation}{$p,v,\vec n,\vec d$}
        \State $N \gets $ matrix, whose $i$th row is the vector $\vec n$ evaluated at $p_i$
        \State $D \gets $ matrix, whose $i$th row is the vector $-v_i \vec d$ evaluated at $p_i$
        \State $M \gets \begin{pmatrix} N & D\end{pmatrix}$
        \State $K \gets \ker(M)$
        \ForAll{columns $k$ in $K$}
            \State $k_f \gets$ first \Call{Length}{$\vec n$} entries of $k$
            \State $k_g \gets$ last \Call{Length}{$\vec d$} entries of $k$
            \State $f \gets \vec n x_f$
            \State $g \gets \vec d x_g$
            \If{ $f(p_0) = 0 $ or $g(p_0) = 0$}
                \State remove column $k$ from $K$
            \EndIf
        \EndFor
        \If{$K$ is empty}
            \State \Return{error} \Comment{No suitable rational functions found}
        \EndIf
        \State $x_f \gets$ first \Call{Length}{$\vec n$} entries of any vector in $K$
        \State $x_g \gets$ last \Call{Length}{$\vec d$} entries of any vector in $K$
        \State $f \gets \vec n x_f$
        \State $g \gets \vec d x_g$
        \State \Return $\frac{f}{g}$
    \EndProcedure
    \end{algorithmic}
    \label{alg:rationalInterpolation}
\end{algorithm}

\begin{remark}
One has freedom to choose any plausible ansatz for $f, g$.
For instance one can take all rational functions in $\tt$ and $\xx$ with degrees of numerators and denominators bounded by some constant $k$. 
Then any sufficiently large $k$ is guaranteed to capture the operators we seek. 
This is the method used in our Macaulay2 implementation.

Other types of ansatzes for coefficients of operators are possible: for instance, one can choose a generating set of monomials in $\xx$ for the residue field $\kappa(PS)$ as an extension of $\KK(\tt)$, together with a degree bound on numerators and denominators of rational functions in $\tt$.  
\end{remark}

Combining the subroutines in \Cref{alg:num_noeth_nops_at_point,alg:rationalInterpolation}, we obtain \Cref{alg:mainAlgorithm}, the main numerical algorithm for computing Noetherian operators.
The algorithm takes as input an ideal and an oracle for sampling points on $\B V(I)$, and outputs a set of Noetherian operators with interpolated rational function coefficients.

\begin{algorithm}
  \caption{Compute Noetherian operators numerically via interpolation}
    \begin{algorithmic}[1]
    \Require $I \subseteq \KK[\tt,\xx]$ an ideal, $p = (p_i)$ a sequence of points in $\B V(P)$ where $P$ is an isolated prime of $I$
    \Ensure A set of Noetherian operators for the $P$-primary component of $I$
    \Procedure{NumericalNoetherianOperators}{$I,p$}
        \ForAll{$i = 1,2,\dotsc$}
            \State $\mathcal N_i \gets $ \Call{NoetherianOperatorsAtPoint}{$I, p_i$}
        \EndFor
        \ForAll{terms $c_\alpha \partial^\alpha$ appearing in elements of $\mathcal N_1$} \Comment{$c_\alpha \in \KK$}
            \State $v_i \gets c_\alpha$ for the corresponding term $c_\alpha \partial^\alpha$ in $\mathcal N_i$ for all $i$
            \State $d \gets 0$
            \Repeat
            	\State $\vec n \gets $monomials $\xx^\alpha \tt^\beta$ such that $|\alpha+\beta| \leq d$.
            	\State $\vec d \gets$ monomials $\tt^\gamma$ such that $|\gamma| \leq d$
            	\State $f_\alpha / g_\alpha \gets $\Call{RationalInterpolation}{$p,v, \vec n,\vec d$}
            	\State $d \gets d+1$
            \Until{interpolation succeeds}
        \EndFor
        \State \Return the set of operators $\mathcal N_1$ in which each term $c_\alpha \partial^\alpha$ is replaced by $\frac{f_\alpha}{g_\alpha} \partial^\alpha$
    \EndProcedure
    \end{algorithmic}
    \label{alg:mainAlgorithm}
\end{algorithm}

Finally, combining \Cref{alg:mainAlgorithm} with an existing numerical irreducible decomposition procedure yields \Cref{alg:num_primary_decomp}, a numerical primary decomposition algorithm for unmixed ideals.

\begin{algorithm}
\caption{Numerical primary decomposition for unmixed ideals}
\begin{algorithmic}[1]
\Require $I \subseteq \KK[\tt,\xx]$ an unmixed ideal
\Ensure A list of irreducible components of $V(I)$ and a set of Noetherian operators for each primary component of $I$
\Procedure{NumericalPrimaryDecomposition}{$I$}
	\State $NV \gets \Call{NumericalIrreducibleDecomposition}{I}$
	\State output $\gets \{ \}$ 
	\For{$W$ in $NV$}
	    \State $p \gets \Call{sample}{W}$
	    \State $N \gets \Call{NumericalNoetherianOperators}{I,p}$
	    \State output $\gets$ append(output, $\{W, N\}$)
	\EndFor
    \State \textbf{return } output
\EndProcedure
\end{algorithmic}
\label{alg:num_primary_decomp}
\end{algorithm}

\begin{example}\label{example:part-of-normal-scroll}
    We compute a primary decomposition using our symbolic algorithm.
	Consider the rational normal scroll $S(2,2) \subseteq \mathbb{P}^5$ given by the prime ideal
	\begin{align*}
		P := I_2 \left( \begin{bmatrix}
			x_0 & x_1 & x_3 & x_4 \\
			x_1 & x_2 & x_4 & x_5
		\end{bmatrix} \right) \subseteq \KK[x_0, \dotsc, x_5]
	\end{align*}
	which has codimension 3 and degree 4. We can take $x_1,x_3,x_4$ as the dependent variables, and $x_0,x_2,x_5$ as independent variables.

	Consider the ideal $I$ generated by the following three polynomials:
	\begin{align*}
		f_1 &:= x_1^4-2 x_0 x_1^2 x_2+x_0^2 x_2^2+x_1 x_2 x_3 x_4-x_0 x_2 x_4^2-x_1^2 x_3 x_5+x_0 x_1 x_4 x_5\\
		f_2 &:= x_1^4-2 x_0 x_1^2 x_2+x_0^2 x_2^2+x_1 x_2 x_3 x_4-x_1^2 x_4^2-x_0 x_2 x_3 x_5+x_0 x_1 x_4 x_5\\
		f_3 &:= x_2^2 x_3 x_4-x_1 x_2 x_4^2+x_4^4-x_1 x_2 x_3 x_5+x_1^2 x_4 x_5-2 x_3 x_4^2 x_5+x_3^2 x_5^2
	\end{align*}
	This ideal was constructed to be a complete intersection defined by suitable linear combinations of generators of $P^2$. 
	Our goal is to compute a primary decomposition of $I$. Using Macaulay2 v1.15 on an Intel\textsuperscript{\textregistered} Core\textsuperscript{\texttrademark} i7-1065G7 CPU @ 1.30GHz, the command \texttt{primaryDecomposition I} did not terminate within 9 hours. On the other hand, \texttt{minimalPrimes I} quickly returns the primes
	\begin{align*}
	    P_1 &= (x_1,x_2,-x_4^2+x_3 x_5),\\
        P_2 &= (x_1,x_0,x_2^2 x_3 x_4+x_4^4-2 x_3 x_4^2 x_5+x_3^2 x_5^2),\\
        P_3 &= (x_4,x_3,-x_1^2+x_0 x_2),\\
        P_4 &= (x_4,x_5,-x_1^2+x_0 x_2),\\
        P_5 &= (-x_1^2+x_0 x_2,x_1 x_3-x_0 x_4,x_2 x_3-x_1 x_4,-x_2 x_4+x_1 x_5,-x_1 x_4+x_0 x_5,-x_4^2+x_3 x_5)
	\end{align*}

	Note that $P_5 = P$ is the prime ideal of the original rational normal scroll. The primes $P_i$ have dimension 3 and degrees (2, 4, 2, 2, 4) respectively.
	We then run \Cref{alg:symb_pos_dim} for the ideal $I$ and each minimal prime $P_i$. Noetherian operators for the $P_1$-primary component of $I$ are
	\begin{align*}
        N_{1,1} &= 1\\
        N_{1,2} &= \partial_{x_{4}}\\
        N_{1,3} &= \partial_{x_{1}}+\frac{1}{{x}_{3}} {x}_{4}\partial_{x_{2}}\\
        N_{1,4} &= \partial_{x_{1}}^{2}+\frac{2}{{x}_{3}} {x}_{4}\partial_{x_{1}}\partial_{x_{2}}+\frac{{x}_{5}}{{x}_{3}} \partial_{x_{2}}^{2}+\frac{2 {x}_{0}}{{x}_{3}^{2}} \partial_{x_{2}}\\
        N_{1,5} &= \partial_{x_{1}}^{3}+\frac{3}{{x}_{3}} {x}_{4}\partial_{x_{1}}^{2}\partial_{x_{2}}+\frac{3 {x}_{5}}{{x}_{3}} \partial_{x_{1}}\partial_{x_{2}}^{2}+\frac{{x}_{5}}{{x}_{3}^{2}} {x}_{4}\partial_{x_{2}}^{3}+\frac{6 {x}_{0}}{{x}_{3}^{2}} \partial_{x_{1}}\partial_{x_{2}}+\frac{6 {x}_{0}}{{x}_{3}^{3}} {x}_{4}\partial_{x_{2}}^{2}+\frac{12 {x}_{0}^{2}-6 {x}_{3}^{2}}{{x}_{3}^{4}{x}_{5}} {x}_{4}\partial_{x_{2}}\\
        N_{1,6} &= \partial_{x_{1}}^{4}+\frac{4}{{x}_{3}} {x}_{4}\partial_{x_{1}}^{3}\partial_{x_{2}}+\frac{6 {x}_{5}}{{x}_{3}} \partial_{x_{1}}^{2}\partial_{x_{2}}^{2}+\frac{4 {x}_{5}}{{x}_{3}^{2}} {x}_{4}\partial_{x_{1}}\partial_{x_{2}}^{3}+\frac{{x}_{5}^{2}}{{x}_{3}^{2}} \partial_{x_{2}}^{4}+\frac{12 {x}_{0}}{{x}_{3}^{2}} \partial_{x_{1}}^{2}\partial_{x_{2}}+\frac{24 {x}_{0}}{{x}_{3}^{3}} {x}_{4}\partial_{x_{1}}\partial_{x_{2}}^{2}\\
        &\quad+\frac{12 {x}_{0}{x}_{5}}{{x}_{3}^{3}} \partial_{x_{2}}^{3}+\frac{48 {x}_{0}^{2}-24 {x}_{3}^{2}}{{x}_{3}^{4}{x}_{5}} {x}_{4}\partial_{x_{1}}\partial_{x_{2}}+\frac{60 {x}_{0}^{2}-24 {x}_{3}^{2}}{{x}_{3}^{4}} \partial_{x_{2}}^{2}+\frac{-3 {x}_{0}^{2}}{{x}_{3}^{4}{x}_{5}} {x}_{4}\partial_{x_{4}}^{2}+\frac{120 {x}_{0}^{3}-48 {x}_{0}{x}_{3}^{2}}{{x}_{3}^{5}{x}_{5}} \partial_{x_{2}}
	\end{align*}
	
	For the $P_2$-primary component, we get Noetherian operators
	\begin{align*}
	    N_{2,1} &= 1
	\end{align*}

	For the $P_3$-primary component, we get Noetherian operators
	\begin{align*}
        N_{3,1} &= 1\\
        N_{3,2} &= \partial_{x_{3}}+\frac{1}{{x}_{0}} {x}_{1}\partial_{x_{4}}\\
        N_{3,3} &= \partial_{x_{1}}\\
        N_{3,4} &= \partial_{x_{1}}^{2}+\frac{4 {x}_{2}}{{x}_{5}^{2}} {x}_{1}\partial_{x_{3}}^{2}+\frac{8 {x}_{2}^{2}}{{x}_{5}^{2}} \partial_{x_{3}}\partial_{x_{4}}+\frac{4 {x}_{2}^{2}}{{x}_{0}{x}_{5}^{2}} {x}_{1}\partial_{x_{4}}^{2}+\frac{-8}{{x}_{0}{x}_{5}} {x}_{1}\partial_{x_{4}}
	\end{align*}

    For the $P_4$-primary component, we get Noetherian operators
	\begin{align*}
        N_{4,1} &= 1\\
        N_{4,2} &= \partial_{x_{4}}+\frac{1}{{x}_{0}} {x}_{1}\partial_{x_{5}}\\
        N_{4,3} &= \partial_{x_{1}}\\
        N_{4,4} &= \partial_{x_{1}}^{2}+\frac{4 {x}_{0}}{{x}_{3}^{2}} {x}_{1}\partial_{x_{4}}^{2}+\frac{8 {x}_{0}{x}_{2}}{{x}_{3}^{2}} \partial_{x_{4}}\partial_{x_{5}}+\frac{4 {x}_{2}}{{x}_{3}^{2}} {x}_{1}\partial_{x_{5}}^{2}+\frac{8}{{x}_{3}} \partial_{x_{5}}
	\end{align*}
		
	For the $P$-primary component, we get Noetherian operators
	\begin{align*}
        N_{5,1} &= 1\\
        N_{5,2} &= \partial_{x_{4}}\\
        N_{5,3} &= \partial_{x_{3}}\\
        N_{5,4} &= \partial_{x_{1}}\\
        N_{5,5} &= \partial_{x_{3}}\partial_{x_{4}}+\frac{{x}_{2}}{2 {x}_{0}{x}_{5}} {x}_{4}\partial_{x_{4}}^{2}\\
        N_{5,6} &= \partial_{x_{1}}\partial_{x_{3}}+\frac{{x}_{2}}{2 {x}_{5}} \partial_{x_{3}}^{2}+\frac{{x}_{2}}{2 {x}_{0}{x}_{5}} {x}_{4}\partial_{x_{1}}\partial_{x_{4}}\\
        N_{5,7} &= \partial_{x_{1}}^{2}+\left(\frac{4 {x}_{2}^{2}-2 {x}_{5}^{2}}{{x}_{5}^{3}} {x}_{4}+\frac{-16 {x}_{0}{x}_{2}^{2}+4 {x}_{0}{x}_{5}^{2}}{{x}_{2}^{3}}\right)\partial_{x_{3}}^{2}+\frac{4 {x}_{2}^{2}-{x}_{5}^{2}}{{x}_{2}^{2}} \partial_{x_{4}}^{2}\\
        N_{5,8} &= \partial_{x_{1}}^{3}+\frac{-12 {x}_{2}^{2}+6 {x}_{5}^{2}}{{x}_{2}{x}_{5}^{2}} {x}_{4}\partial_{x_{1}}^{2}\partial_{x_{3}}+\frac{-48 {x}_{0}{x}_{2}^{2}+12 {x}_{0}{x}_{5}^{2}}{{x}_{2}^{3}} \partial_{x_{1}}\partial_{x_{3}}^{2}\\
        &\quad+\left(\frac{64 {x}_{0}{x}_{2}^{4}-48 {x}_{0}{x}_{2}^{2}{x}_{5}^{2}+8 {x}_{0}{x}_{5}^{4}}{{x}_{2}^{4}{x}_{5}^{2}} {x}_{4}+\frac{-16 {x}_{0}{x}_{2}^{2}}{{x}_{5}^{3}}\right)\partial_{x_{3}}^{3}+\frac{-6 {x}_{2}^{2}+3 {x}_{5}^{2}}{{x}_{2}{x}_{5}} \partial_{x_{1}}^{2}\partial_{x_{4}}\\
        &\quad+\frac{-48 {x}_{2}^{2}+12 {x}_{5}^{2}}{{x}_{2}^{2}{x}_{5}} {x}_{4}\partial_{x_{1}}\partial_{x_{3}}\partial_{x_{4}}+\left(\frac{-24 {x}_{2}^{3}}{{x}_{5}^{4}} {x}_{4}+\frac{96 {x}_{0}{x}_{2}^{4}-72 {x}_{0}{x}_{2}^{2}{x}_{5}^{2}+12 {x}_{0}{x}_{5}^{4}}{{x}_{2}^{4}{x}_{5}}\right)\partial_{x_{3}}^{2}\partial_{x_{4}}\\
        &\quad+\frac{-12 {x}_{2}^{2}+3 {x}_{5}^{2}}{{x}_{2}^{2}} \partial_{x_{1}}\partial_{x_{4}}^{2}+\left(\frac{48 {x}_{2}^{4}-36 {x}_{2}^{2}{x}_{5}^{2}+6 {x}_{5}^{4}}{{x}_{2}^{3}{x}_{5}^{2}} {x}_{4}+\frac{-24 {x}_{2}^{3}}{{x}_{5}^{3}}\right)\partial_{x_{3}}\partial_{x_{4}}^{2}\\
        &\quad+\left(\frac{-8 {x}_{2}^{4}}{{x}_{0}{x}_{5}^{4}} {x}_{4}+\frac{8 {x}_{2}^{4}-6 {x}_{2}^{2}{x}_{5}^{2}+{x}_{5}^{4}}{{x}_{2}^{3}{x}_{5}}\right)\partial_{x_{4}}^{3}+\left(\frac{-48 {x}_{2}^{2}+12 {x}_{5}^{2}}{{x}_{2}^{3}{x}_{5}} {x}_{4}+\frac{48 {x}_{2}^{4}+6 {x}_{2}^{2}{x}_{5}^{2}-3 {x}_{5}^{4}}{{x}_{2}{x}_{5}^{4}}\right)\partial_{x_{3}}^{2}\\
        &\quad+\frac{12 {x}_{2}^{2}-3 {x}_{5}^{2}}{{x}_{0}{x}_{2}{x}_{5}^{2}} {x}_{4}\partial_{x_{1}}\partial_{x_{4}}+\frac{12 {x}_{2}^{2}-3 {x}_{5}^{2}}{{x}_{0}{x}_{2}^{2}{x}_{5}} {x}_{4}\partial_{x_{4}}^{2}
	\end{align*}
	
    From this we deduce that if $Q$ is the $P$-primary component of $I$, then the multiplicity of $Q$ over $P$ is 8 (note that this is consistent with the fact that $2 (6) + 4(1) + 2(4) + 2(4) + 4m(Q,P) = \deg I = 4^3$).
    Furthermore, as the set of Noetherian operators of $Q$ contains the set of Noetherian operators of $P^2$, namely $\{1, \partial_{x_1}, \partial_{x_3}, \partial_{x_4}\}$, we see that $Q$ is strictly contained in $P^2$.
    We can also see that the $P_2$-primary component is radical.

    One can also run \Cref{alg:num_primary_decomp} on this example: using a reasonable number of points quickly yields partial information about the Noetherian operators displayed above, such as the multiplicity. The caveat is that some of the rational functions have large degree (as evidenced by the above), so interpolating those coefficients will take correspondingly longer times. 
\end{example}

\begin{example} \label{ex:carpet}
Next, we illustrate a numerical primary decomposition using \Cref{alg:mainAlgorithm}. 
Let $J$ be the ideal of the K3 carpet over the scroll $S(3,3) \subseteq \mathbb{P}^7$, i.e.
\begin{align*}
    J &:= (x_1^2-x_0 x_2,x_1 x_2-x_0 x_3,x_2^2-x_1 x_3,x_2 y_0-2 x_1 y_1+x_0 y_2,x_3 y_0-2 x_2 y_1+x_1 y_2,\\
&\qquad x_2 y_1-2 x_1 y_2+x_0 y_3,x_3 y_1-2 x_2 y_2+x_1 y_3,y_1^2-y_0 y_2,y_1 y_2-y_0 y_3,y_2^2-y_1 y_3)
\end{align*}
in the ring $\mathbb{Q}[x_0,\dotsc,x_3, y_0,\dotsc,y_3]$. 
Let $I$ be the ideal of a generic complete intersection of quadrics containing the carpet, generated by 5 random $\mathbb{Q}$-linear combinations of the 10 generators of $J$.

Neither \texttt{primaryDecomposition I} nor \texttt{minimalPrimes I} terminated within 9 hours.
However, a numerical irreducible decomposition reveals that $I$ has two minimal primes, of dimension 3 and degrees (6, 20) respectively.
We then run \Cref{alg:num_primary_decomp} on the witness sets.

Let $Q$ be the component primary to the degree 6 minimal prime of $I$.
We obtain
\begin{align*}
    N_{Q,1} &= 1\\
    N_{Q,2} &= \partial_{y_{0}}+\frac{{.666667} {x}_{1}}{{x}_{0}} \partial_{y_{1}}+\frac{{.333333} {x}_{2}}{{x}_{0}} \partial_{y_{2}}
\end{align*}
as Noetherian operators for $Q$. 
The component primary to the degree 20 minimal prime (which defines a generic link of the K3 carpet) has Noetherian operators $\{1\}$, i.e. is radical. For timing: the numerical irreducible decomposition took under 3 seconds, and computing Noetherian operators took under 2 seconds for each component.

As the degree 6 minimal prime obtained from the numerical irreducible decomposition is the scroll $S(3,3)$ (being of minimal degree), i.e. $\sqrt{Q} = \sqrt{J}$, a natural question that arises is whether $Q$ is in fact equal to $J$.
We may verify this by directly computing Noetherian operators of $J$ using \Cref{alg:symb_pos_dim}, obtaining
\begin{align*}
    N_{J,1} &= 1\\
    N_{J,2} &= \partial_{y_0} + \frac{2x_3 y_1}{3x_0 y_3} \partial_{y_1} + \frac{x_3 y_2}{3 x_0 y_3} \partial_{y_2}
\end{align*}
Although the Noetherian operators for $J$ and the Noetherian operators for $Q$ look different, the rational function coefficients are equal on the minimal prime of interest, which is the scroll (note that $x_3 y_1 - x_1 y_3$ and $x_3 y_2 - y_3 x_2$ both lie in $\sqrt{J}$).
This confirms that $Q = J$.
In this case, the numerical interpolation found representatives of rational functions with lowest possible degrees.

\end{example}

\section{Non-primary ideals} \label{sec:gen_properties}
Thus far, we have focused our attention on primary ideals. 
As we have seen, this is enough for the purpose of numerical primary decomposition, cf. \Cref{alg:num_primary_decomp}. 
In this last section, we discuss some properties and behaviors of Noetherian operators for arbitrary (i.e. not necessarily primary) ideals.
First, we record how Noetherian operators vary under linear coordinate changes.

\begin{proposition}
Let $R := \KK[x_1, \ldots, x_n] = \KK[\xx]$, and let $\varphi$ be a $\KK$-linear automorphism of $R$ given by $\varphi(\xx) := A \xx$ for some $A \in GL_n(\KK)$. 
Define a $\KK$-linear automorphism of the Weyl algebra $W_R = \KK[\xx]\langle \partial \rangle$ by 
\[
\psi : \begin{pmatrix}
\xx \\
\partial 
\end{pmatrix} \mapsto 
\begin{pmatrix}
A \xx \\
(A^{-1})^T \partial 
\end{pmatrix}.
\]
If $I \subseteq R$ is an ideal, and $D_1, \ldots, D_r$ is a set of Noetherian operators for $I$, then $\psi(D_1), \ldots, \psi(D_r)$ is a set of Noetherian operators for $\varphi(I) \subseteq R$.
\end{proposition}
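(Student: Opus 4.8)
The plan is to isolate one intertwining identity and deduce everything formally from it. Concretely, I claim that for every $D \in W_R$ and $f \in R$,
\[
\psi(D) \bullet \varphi(f) = \varphi(D \bullet f),
\]
i.e.\ $\psi(D) \circ \varphi = \varphi \circ D$ as $\KK$-linear endomorphisms of $R$. Viewing $W_R \subseteq \End_\KK(R)$ and $\varphi \in \End_\KK(R)$, this says exactly that $\psi$ is conjugation by $\varphi$: $\psi(D) = \varphi \circ D \circ \varphi^{-1}$. Since $\varphi$ is a $\KK$-linear bijection, conjugation by $\varphi$ is an algebra automorphism of $\End_\KK(R)$; I would check it carries the generators $x_i, \partial_i$ of $W_R$ back into $W_R$ (and likewise for $\varphi^{-1}$), so that it restricts to an automorphism of $W_R$ agreeing with $\psi$ on generators, hence equal to $\psi$. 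This simultaneously reproves that $\psi$ is well defined.

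To verify the identity it suffices, by $\KK$-linearity and multiplicativity in $D$ of both sides (the right-hand side is multiplicative because $\varphi$ is a ring automorphism of $R$), to check it on the generators $x_i$ and $\partial_i$. For $D = x_i$: $\psi(x_i) = (A\xx)_i = \varphi(x_i)$, so $\psi(x_i)\bullet\varphi(f) = \varphi(x_i)\varphi(f) = \varphi(x_i f) = \varphi(x_i \bullet f)$. For $D = \partial_i$: since $\varphi(f)$ is the polynomial $\xx \mapsto f(A\xx)$, the chain rule gives
\[
\partial_i \bullet \varphi(f) \;=\; \sum_k A_{ki}\, \varphi(\partial_k \bullet f),
\]
i.e.\ $\partial_i \circ \varphi = \varphi \circ \bigl(\sum_k A_{ki}\partial_k\bigr)$ on $R$. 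Conjugating this relation (solving for $\varphi \circ \partial_i \circ \varphi^{-1}$ as a $\KK$-combination of the $\partial_j$ and comparing matrices) yields $\varphi \circ \partial_i \circ \varphi^{-1} = \sum_j ((A^{-1})^T)_{ij}\partial_j = \psi(\partial_i)$, which is the claim for $D = \partial_i$.

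Granting the intertwining identity, the proposition is immediate. As $\varphi$ is a ring automorphism, $\varphi(\sqrt I) = \sqrt{\varphi(I)}$. Then for $g \in R$,
\begin{align*}
g \in \varphi(I) &\iff \varphi^{-1}(g) \in I \iff D_i \bullet \varphi^{-1}(g) \in \sqrt I \text{ for all } i\\
&\iff \varphi\bigl(D_i \bullet \varphi^{-1}(g)\bigr) \in \sqrt{\varphi(I)} \text{ for all } i \iff \psi(D_i) \bullet g \in \sqrt{\varphi(I)} \text{ for all } i,
\end{align*}
where the second equivalence is \Cref{def:noethOps} applied to $I$, the third uses that $\varphi$ is an automorphism preserving radicals, and the last uses the intertwining identity with $f = \varphi^{-1}(g)$. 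By \Cref{def:noethOps}, $\psi(D_1),\dotsc,\psi(D_r)$ is therefore a set of Noetherian operators for $\varphi(I)$.

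\textbf{Expected main obstacle.} The only substantive step is the $\partial_i$ case of the intertwining identity: applying the chain rule and then bookkeeping the linear algebra so that precisely the transpose-inverse $(A^{-1})^T$ (the covariance of covectors against the action $A$ on coordinates) appears on the $\partial$'s, rather than $A$, $A^T$, or $A^{-1}$. Everything after that is formal.
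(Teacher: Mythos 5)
Your proposal is correct and follows essentially the same route as the paper: both reduce to the intertwining identity $\varphi(D \bullet \varphi^{-1}(f)) = \psi(D)\bullet f$, verify it on the generators $x_i,\partial_i$ (with the chain rule producing $(A^{-1})^T$), and extend by linearity and multiplicativity before running the same formal chain of equivalences. Your packaging of $\psi$ as conjugation by $\varphi$ inside $\End_\KK(R)$ is a slightly cleaner way of organizing the paper's extension steps (product rule over monomials, multiplicativity of $\psi$ over $\partial^\alpha$), but it is not a different argument.
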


\begin{proof}
For $f \in R$, one has
\begin{align*}
f \in \varphi(I) &\iff \varphi^{-1}(f) \in I \iff D_i \bullet \varphi^{-1}(f) \in \sqrt{I} \quad \forall i = 1, \ldots, r \\
&\iff \varphi(D_i \bullet \varphi^{-1}(f)) \in \sqrt{\varphi(I)} \quad \forall i = 1, \ldots, r,
\end{align*}
since $\sqrt{\varphi(I)} = \varphi(\sqrt{I})$, as $\varphi$ is a $\KK$-linear automorphism of $R$. 
Writing $D_i = \sum_\alpha p_\alpha \partial^\alpha$, we have $\varphi(D_i \bullet \varphi^{-1}(f)) = \varphi( (\sum_\alpha p_\alpha \partial^\alpha) \bullet \varphi^{-1}(f)) = \sum_\alpha \varphi(p_\alpha) \varphi(\partial^\alpha \bullet \varphi^{-1}(f))$, so it suffices to show that $\varphi(\partial^\alpha \bullet \varphi^{-1}(f)) = \psi(\partial^\alpha) \bullet f$ for any $f \in R$. 
By linearity, it suffices to check this when $f = \xx^\beta$ is a monomial, i.e. we must show $\varphi(\partial^\alpha \bullet \varphi^{-1}(\xx^\beta)) = \psi(\partial^\alpha) \bullet \xx^\beta$ for all $\alpha, \beta \in \mathbb{N}^n$. 

We first consider the case where $\alpha, \beta$ are standard basis vectors, i.e. $\partial^\alpha = \partial_{x_j}$ and $\xx^\beta = x_i$ for some $i, j \in \{1, \ldots, n\}$. 
Then $\varphi \left(\partial_{x_j} \bullet \varphi^{-1}(x_i) \right) = \varphi \left(\partial_{x_j} \bullet \sum_{k=1}^n (A^{-1})_{i,k} x_k \right) = \varphi \left((A^{-1})_{i,j} \right) = (A^{-1})_{i,j} = \left( \sum_{k=1}^n (A^{-1})_{k,j} \partial_{x_k} \right) \bullet x_i = \psi(\partial_{x_j}) \bullet x_i$.

To show that this extends to arbitrary $\beta$, note that both $\varphi \left( \partial_{x_j} \bullet \varphi^{-1}( \rule{0.2cm}{0.15mm}) \right)$ and $\psi(\partial_{x_j}) \bullet ( \rule{0.2cm}{0.15mm})$ are both differential operators, which must satisfy the product rule, so if these agree on every variable $x_i$ then they agree on every monomial $\xx^\beta$. 
To extend to arbitrary $\alpha$, note that $\psi$ preserves multiplication in $W$ by definition, so
\begin{align*}
\varphi \left( \partial_{x_j} \partial_{x_k} \bullet \varphi^{-1}(\rule{0.2cm}{0.15mm}) \right) &= 
\varphi \left( \partial_{x_j} \bullet \varphi^{-1} \varphi \left( \partial_{x_k} \bullet \varphi^{-1}(\rule{0.2cm}{0.15mm}) \right) \right) \\
&= \varphi \left( \partial_{x_j} \bullet \varphi^{-1} (\psi(\partial_{x_k}) \bullet (\rule{0.2cm}{0.15mm}) ) \right) \\
&= \psi(\partial_{x_j}) \bullet \psi(\partial_{x_k} \bullet ( \rule{0.2cm}{0.15mm} )) \\
&= \psi(\partial_{x_j}) \psi(\partial_{x_k} \bullet ( \rule{0.2cm}{0.15mm} )) \\
&= \psi(\partial_{x_j} \partial_{x_k}) \bullet ( \rule{0.2cm}{0.15mm} )
\end{align*}
hence inductively $\varphi \left( \partial^\alpha \bullet \varphi^{-1}(\rule{0.2cm}{0.15mm}) \right) = \psi(\partial^\alpha) \bullet (\rule{0.2cm}{0.15mm})$ for any $\alpha$.
\end{proof}

Next, we give a construction for a global set of Noetherian operators for an unmixed ideal:

\begin{proposition}
Let $I$ be an unmixed ideal, with a minimal primary decomposition $I = q_1 \cap \ldots \cap q_r$, and let $N_i$ be a set of Noetherian operators for $q_i$ for $i = 1, \ldots, r$. 
For $D \in \bigcup_i N_i$, choose $\displaystyle h_D \in \bigcap_{D \not \in N_j} \sqrt{q_j} \setminus \bigcup_{D \in N_i} \sqrt{q_i}$.
Then $N := \{ h_D D \mid D \in \bigcup_i N_i \}$ is a set of Noetherian operators for $I$.
\end{proposition}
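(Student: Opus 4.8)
The plan is to verify the defining biconditional of \Cref{def:noethOps} directly for the set $N$: that $f \in I$ if and only if $(h_D D) \bullet f \in \sqrt{I}$ for every $D \in \bigcup_i N_i$. Two elementary facts are used throughout: $\sqrt{I} = \bigcap_j \sqrt{q_j}$, and $(h_D D)\bullet f = h_D \cdot (D \bullet f)$ inside $W_R$, so that membership of $(h_D D)\bullet f$ in a prime $\sqrt{q_j}$ can be decided by whether $h_D \in \sqrt{q_j}$ or $D\bullet f \in \sqrt{q_j}$.

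First I would check that the $h_D$ exist at all, i.e. that $\bigcap_{D \notin N_j}\sqrt{q_j} \not\subseteq \bigcup_{D \in N_i}\sqrt{q_i}$. By prime avoidance, failure of this would give $\bigcap_{D\notin N_j}\sqrt{q_j} \subseteq \sqrt{q_i}$ for some $i$ with $D \in N_i$, hence (as $\sqrt{q_i}$ is prime) $\sqrt{q_j} \subseteq \sqrt{q_i}$ for some $j$ with $D\notin N_j$; since $D\in N_i$ and $D\notin N_j$ we have $i\ne j$, so these are distinct associated primes of $I$, and \emph{unmixedness} — all associated primes having equal dimension — forbids one from containing the other. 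This is the one place unmixedness genuinely enters. (If $D$ happens to lie in every $N_j$, the empty intersection is $R$ and one only needs $R \ne \bigcup_i \sqrt{q_i}$, again prime avoidance.)

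For the forward direction, suppose $f\in I$, so $f \in q_j$ for all $j$; fix $D\in\bigcup_i N_i$ and check $(h_D D)\bullet f\in\sqrt{q_j}$ for each $j$ by cases. If $D\in N_j$, then $D\bullet f\in\sqrt{q_j}$ because $N_j$ is a set of Noetherian operators for $q_j$ and $f\in q_j$, so $h_D(D\bullet f)\in\sqrt{q_j}$. If $D\notin N_j$, then $h_D\in\sqrt{q_j}$ by construction, so $h_D(D\bullet f)\in\sqrt{q_j}$ since $\sqrt{q_j}$ is an ideal. Hence $(h_D D)\bullet f\in\bigcap_j\sqrt{q_j}=\sqrt{I}$. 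For the converse, suppose $(h_D D)\bullet f\in\sqrt{I}$ for all $D$; to conclude $f\in I=\bigcap_i q_i$ it suffices, by the Noetherian-operator property of each $N_i$, to show $D\bullet f\in\sqrt{q_i}$ for every $i$ and every $D\in N_i$. Fixing such $i$ and $D$: then $h_D(D\bullet f)=(h_D D)\bullet f\in\sqrt{I}\subseteq\sqrt{q_i}$, while $h_D\notin\sqrt{q_i}$ by construction; primality of $\sqrt{q_i}$ forces $D\bullet f\in\sqrt{q_i}$.

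The substantive point is the existence of the $h_D$ — prime avoidance combined with unmixedness to rule out containments among the $\sqrt{q_i}$ — while the remainder is a direct case analysis using the product structure $(h_D D)\bullet f = h_D(D\bullet f)$ and the primality of the radicals; I expect no real obstacle beyond keeping the index bookkeeping straight and noting that both defining conditions on $h_D$ (membership in $\bigcap_{D\notin N_j}\sqrt{q_j}$ and exclusion from $\bigcup_{D\in N_i}\sqrt{q_i}$) are each used once, in the two directions respectively.
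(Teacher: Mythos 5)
Your proposal is correct and follows essentially the same route as the paper's proof: prime avoidance plus unmixedness for the existence of the $h_D$, the case split on $D \in N_j$ versus $D \notin N_j$ for the forward direction, and primality of $\sqrt{q_i}$ together with $h_D \notin \sqrt{q_i}$ for the converse (which you phrase contrapositively to the paper's, a purely cosmetic difference).
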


\begin{proof}
First, note that if $\displaystyle \bigcap_{D \not \in N_j} \sqrt{q_j} \subseteq \bigcup_{D \in N_i} \sqrt{q_i}$ for some $D$, then $\displaystyle \bigcap_{D \not \in N_j} \sqrt{q_j} \subseteq \sqrt{q_i}$ for some $i$ by prime avoidance, and then $\sqrt{q_j} \subseteq \sqrt{q_i}$ for some $i \ne j$, contradicting the unmixedness assumption on $I$. 
Thus choices of $h_D$ always exist.

Suppose $f \in I$, and choose $D \in \bigcup_i N_i$. 
For any $i$ with $D \in N_i$, we have $f \in q_i \implies D \bullet f \in \sqrt{q_i}$. 
By choice of $h_D$, this implies $\displaystyle h_D D \bullet f \in \left( \bigcap_{D \in N_i} \sqrt{q_i} \right) \cap \left( \bigcap_{D \not \in N_j} \sqrt{q_j} \right) = \sqrt{I}$.

Conversely, suppose $f \not \in I$. 
Then WLOG $f \not \in q_1$, so there exists $D_1 \in N_1$ such that $D_1 \bullet f \not \in \sqrt{q_1}$. 
Since also $h_{D_1} \not \in \sqrt{q_1}$ and $\sqrt{q_1}$ is prime, this means $h_{D_1} D_1 \bullet f \not \in \sqrt{q_1}$, and thus $h_{D_1} D_1 \bullet f \not \in \sqrt{I}$.
\end{proof}

Finally, we consider the question of recovering $\sqrt{I}$ from the data of $I$ and Noetherian operators for $I$.
Fix a finite generating set $G$ of $I$ and a set of Noetherian operators $N$ of $I$.
We consider the ideal $N(G) := (D \bullet g \mid D \in N, \, g \in G)$ obtained by applying operators in $N$ to the generating set $G$. 
Note that since $G$ generates $I$, one has $N(G) = (D \bullet f \mid D \in N, \, f \in I)$ -- in particular, $N(G)$ does not depend on the choice of $G$, and one always has $N(G) \subseteq \sqrt{I}$ by definition.
However, even if $I$ is primary, $N(G)$ need not equal $\sqrt{I}$:

\begin{example} \label{ex:recoveryNotUnmixed}
Let $I = ((xy - z^2)^2) \subseteq \mathbb{C}[x,y,z]$. 
Then $N = \{1, \partial_y\}$ is a set of Noetherian operators of $I$. 
Applying $N$ to the single generator of $I$ yields $N(G) = ((xy-z^2)^2, 2x(xy-z^2))$, which is strictly contained in $\sqrt{I} = (xy - z^2)$.
\end{example}

However, the issue in \Cref{ex:recoveryNotUnmixed} was that $N(G)$ was not unmixed (whereas radical ideals are evidently unmixed), which turns out to be the only obstruction:

\begin{proposition}
If $I = (G)$ is primary, and $N$ is a set of Noetherian operators for $I$ constructed as in \Cref{prop:pos_dim_noeth_ops}(1), then the unmixed part of $N(G)$ is $\sqrt{I}$.
\end{proposition}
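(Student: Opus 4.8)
The plan is to localize at $P := \sqrt I$ and reduce to the zero-dimensional case, where the statement becomes a computation with Macaulay dual spaces. First I would note that the construction of \Cref{prop:pos_dim_noeth_ops}(1) always includes the constant operator $1$: the column of the Macaulay matrix indexed by $\partial^0=1$ has entries $\langle 1,\xx^\alpha f_i\rangle_{\kappa(P)}=\overline{\xx^\alpha f_i}=0$ since $f_i\in I\subseteq P$, so the coordinate vector of $1$ lies in every kernel and survives the column reduction. Hence $I=(G)\subseteq N(G)$, and since also $N(G)\subseteq\sqrt I=P$ we get $\sqrt{N(G)}=P$. Therefore $P$ is the unique minimal prime of $N(G)$; any embedded prime of $N(G)$ strictly contains $P$ and so has strictly smaller dimension, and consequently the unmixed part of $N(G)$ is its (unique top-dimensional) $P$-primary component $q = N(G)R_P\cap R$. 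It remains to prove $q=P$, i.e.\ $N(G)R_P=PR_P$.

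Next I would pass to $S:=\KK(\tt)[\xx]$ as in \Cref{ssec:positive_dim}. Since $\tt$ is a maximal independent set, $P\cap(\KK[\tt]\setminus\{0\})=\emptyset$, so $R_P=S_{PS}$ and $PS$ is a maximal ideal of $S$ with residue field $\kappa(P)$ (using \Cref{lemma:zeroDimExtension}); thus it suffices to show $N(G)S=PS$. A short check gives $N(G)S=(D_i\bullet f\mid i,\ f\in IS)$: the lifts $N_i$ and the $D_i$ differ by units of $S$, the $D_i$ involve only $\partial_\xx$, and $IS=(\KK[\tt]\setminus\{0\})^{-1}I$. As $1$ is still among the $D_i$, this ideal sits between $IS$ and $PS$, hence is $PS$-primary.

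The crux is the equality $N(G)S=PS$, which I would deduce from the Macaulay correspondence (\Cref{prop:bijection_non_rational_mmm}): it is equivalent to $D_{PS}[N(G)S]=D_{PS}[PS]=\kappa(P)\cdot 1$, and only the inclusion $\subseteq$ needs an argument. Given $E\in D_{PS}[N(G)S]$, for every $i$ and every $f\in IS$ we have $\langle E,D_i\bullet f\rangle=0$; the composition identity $\langle E,D_i\bullet f\rangle=\langle E\,\overline{D_i},f\rangle$ then shows $E\,\overline{D_i}\in D_{PS}[IS]$ for all $i$. Choosing $i$ with $\overline{D_i}=1$ gives $E\in D_{PS}[IS]$, so the $\partial$-degree of $E$ is at most $\delta$, the $\partial$-degree of $D_{PS}[IS]$; choosing $i=k$ with $\overline{D_k}$ of $\partial$-degree exactly $\delta$ (such $k$ exists since the $\overline{D_i}$ span $D_{PS}[IS]$) gives $E\,\overline{D_k}\in D_{PS}[IS]$, which forces the $\partial$-degree of $E$ to be $0$, i.e.\ $E\in\kappa(P)\cdot 1$. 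The degree bookkeeping is legitimate because operators spanning a local dual space have constant coefficients, so $E\,\overline{D_i}$ is a product in the polynomial ring $\kappa(P)[\partial_\xx]$, in which $\partial$-degree is additive with no zero divisors. Contracting $N(G)S=PS$ back to $R$ then gives $q=PS\cap R=P$.

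The step I expect to be most delicate is making the composition identity $\langle E,D_i\bullet f\rangle=\langle E\,\overline{D_i},f\rangle$ precise when $P$ is not rational, i.e.\ giving the right meaning to the product $E\,\overline{D_i}$ in $W_{\kappa(P)}$; the clean route is to first replace $P$ by the rational point $\mm_p$ over $\kappa(P)$ via \Cref{lem:nonrational-to-rational} and argue with the classical constant-coefficient Macaulay dual space, where this is the familiar shift relation $\langle\partial_p^\gamma,D'\bullet f\rangle=\langle\partial_p^\gamma\,D',f\rangle$. A secondary point is pinning down the precise meaning of ``constructed as in \Cref{prop:pos_dim_noeth_ops}(1)'': the argument uses $1\in N$ (equivalently $I\subseteq N(G)$), which holds for the output of \Cref{alg:symb_pos_dim} but should be stated as an explicit hypothesis if arbitrary $\kappa(P)$-spanning sets of $D_{PS}[IS]$ are allowed; and \Cref{ex:recoveryNotUnmixed} already shows that ``unmixed part'' cannot be dropped from the conclusion.
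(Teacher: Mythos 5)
Your proposal takes essentially the same route as the paper's proof: note $1\in N$ so $I\subseteq N(G)\subseteq P$ and the unmixed part is the $P$-primary component, reduce to dimension zero over $S=\KK(\tt)[\xx]$, and then use a degree count in the Macaulay dual space. The paper runs the count contrapositively --- a nonzero $p\in D_P[Q]$ of $\partial$-degree $\ge 1$ composed with a top-degree element of $N$ produces an element of $D_P[I]$ of $\partial$-degree exceeding $\max\deg D_P[I]$, contradicting that the images of $N$ span $D_P[I]$ --- whereas you argue directly that every $E\in D_{PS}[N(G)S]$ has $\partial$-degree $0$; these are the same argument.

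One identity should be repaired, and it is exactly the one you flag as delicate. Because the lifts $N_i$ have non-constant coefficients, $\langle E, N_i\bullet f\rangle$ equals $\langle E\,\overline{D_i}+L_i,\,f\rangle$ rather than $\langle E\,\overline{D_i},f\rangle$, where $L_i$ collects the Leibniz terms in which derivatives from $E$ fall on the coefficients of $N_i$ before evaluation at the point; passing to the rational point $\mm_p$ does not remove these terms, since the shift relation $\langle\partial_p^\gamma, D'\bullet f\rangle=\langle\partial_p^\gamma D', f\rangle$ is valid only for constant-coefficient $D'$. Fortunately $L_i$ has $\partial$-degree strictly less than $\deg E+\deg\overline{D_i}$, and $\kappa(PS)[\partial_p]$ is a domain, so the functional $f\mapsto\langle E, N_k\bullet f\rangle$ still has leading form equal to that of the commutative product $E\,\overline{D_k}$; taking $\overline{D_k}$ of maximal degree $\delta$ then forces $\deg E+\delta\le\delta$ exactly as you intend (and for $\overline{D_i}=1$ the lift is a unit of $\KK[\tt]$, so $L_i=0$ and $E\in D_{PS}[IS]$ as claimed). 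With that correction the proof is complete and matches the paper's.
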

\begin{proof}
Let $P = \sqrt{I}$. 
Since WLOG $1$ is in the $\KK$-span of $N$, we have $I \subseteq N(G) \subseteq P$, which implies that $\sqrt{N(G)} = P$. 
Let $Q$ be the unmixed part of $N(G)$, which is the $P$-primary component of $N(G)$.

First consider the case $\dim I = 0$, so that $Q = N(G)$, the dual space $D_P[P]$ is spanned by $\{1\}$, and $N \subseteq D_P[I]$. 
Suppose that $Q \neq P$, so that $\dim_{\kappa(P)} D_P[Q] > 1$, hence $D_P[Q]$ contains a nonzero element $p$ of $\partial$-degree $\ge 1$.  
For each $D \in N$, the operator $p\circ D$ is an element of $D_P[I]$ (since $D \bullet f \in Q$ for all $f \in I$).
Choosing some $D \in N$ of maximal degree gives that $p \circ D$ is outside the linear span of $N$. 
Therefore $D_P[I]$ is strictly larger than the span of $N$, contradicting \Cref{cor:minimalNops}.

If now $I$ is primary of any dimension, we may perform Noether normalization to obtain a zero-dimensional ideal $IS$. 
By \Cref{prop:pos_dim_noeth_ops}(2), the set $N$ gives a set of Noetherian operators for $IS$. 
Then the reasoning in the zero-dimensional case above shows that $QS = PS$, which implies $Q = P$.
\end{proof}

\bibliography{references}{}

\begin{thebibliography}{10}

\bibitem{atiyah1969introduction}
Michael~F. Atiyah and Ian~G. Macdonald.
\newblock {\em Introduction to commutative algebra}.
\newblock Addison Wesley, 1969.

\bibitem{bates2011toward}
Daniel~J Bates and Luke Oeding.
\newblock Toward a {S}almon {C}onjecture.
\newblock {\em Experimental Mathematics}, 20(3):358--370, 2011.

\bibitem{yairon_roser_bernd}
Yairon Cid-Ruiz, Roser Homs, and Bernd Sturmfels.
\newblock Primary ideals and their differential equations.
\newblock {\em arXiv preprint arXiv:2001.04700}, 2020.

\bibitem{damiano}
Alberto Damiano, Irene Sabadini, and Daniele~C. Struppa.
\newblock Computational methods for the construction of a class of {N}oetherian
  operators.
\newblock {\em Experiment. Math.}, 16(1):41--53, 2007.

\bibitem{decker1999primary}
Wolfram Decker, Gert-Martin Greuel, and Gerhard Pfister.
\newblock Primary decomposition: algorithms and comparisons.
\newblock In {\em Algorithmic algebra and number theory}, pages 187--220.
  Springer, 1999.

\bibitem{eisenbud2013commutative}
David Eisenbud.
\newblock {\em Commutative Algebra: with a view toward algebraic geometry},
  volume 150.
\newblock Springer Science \& Business Media, 2013.

\bibitem{eisenbud1992direct}
David Eisenbud, Craig Huneke, and Wolmer Vasconcelos.
\newblock Direct methods for primary decomposition.
\newblock {\em Inventiones mathematicae}, 110(1):207--235, 1992.

\bibitem{gianni1987algebraic}
Patrizia Gianni and Teo Mora.
\newblock Algebraic solution of systems of polynomial equations using
  {G}roebner bases.
\newblock In {\em International Conference on Applied Algebra, Algebraic
  Algorithms, and Error-Correcting Codes}, pages 247--257. Springer, 1987.

\bibitem{gianni1988grobner}
Patrizia Gianni, Barry Trager, and Gail Zacharias.
\newblock Gr{\"o}bner bases and primary decomposition of polynomial ideals.
\newblock {\em Journal of Symbolic Computation}, 6(2-3):149--167, 1988.

\bibitem{grayson2002macaulay2}
Daniel~R Grayson and Michael~E Stillman.
\newblock Macaulay2, a software system for research in algebraic geometry,
  2002.

\bibitem{grobner1938neue}
Wolfgang Gr{\"o}bner.
\newblock {\"U}ber eine neue idealtheoretische {G}rundlegung der algebraischen
  {G}eometrie.
\newblock {\em Mathematische Annalen}, 115(1):333--358, 1938.

\bibitem{hauenstein2018exceptional}
Jonathan~D Hauenstein, Samantha~N Sherman, and Charles~W Wampler.
\newblock Exceptional {S}tewart--{G}ough platforms, {S}egre embeddings, and the
  special {E}uclidean group.
\newblock {\em SIAM Journal on Applied Algebra and Geometry}, 2(1):179--205,
  2018.

\bibitem{hauenstein2010witness}
Jonathan~D Hauenstein and Andrew~J Sommese.
\newblock Witness sets of projections.
\newblock {\em Applied Mathematics and Computation}, 217(7):3349--3354, 2010.

\bibitem{krone2017eliminating}
Robert Krone and Anton Leykin.
\newblock Eliminating dual spaces.
\newblock {\em Journal of Symbolic Computation}, 79:609--622, 2017.

\bibitem{krone2017numerical}
Robert Krone and Anton Leykin.
\newblock Numerical algorithms for detecting embedded components.
\newblock {\em Journal of Symbolic Computation}, 82:1--18, 2017.

\bibitem{leykin2008numerical}
Anton Leykin.
\newblock Numerical primary decomposition.
\newblock In {\em International Symposium on Symbolic and Algebraic
  Computation}, pages 165--172, 2008.

\bibitem{macaulay1994algebraic}
F.~S. Macaulay.
\newblock {\em The algebraic theory of modular systems}.
\newblock Cambridge Mathematical Library. Cambridge University Press,
  Cambridge, 1994.
\newblock Revised reprint of the 1916 original, With an introduction by Paul
  Roberts.

\bibitem{mmm93}
M.~G. Marinari, H.~M. M\"{o}ller, and T.~Mora.
\newblock Gr\"{o}bner bases of ideals defined by functionals with an
  application to ideals of projective points.
\newblock {\em Appl. Algebra Engrg. Comm. Comput.}, 4(2):103--145, 1993.

\bibitem{Mourrain:inverse-systems}
B.~Mourrain.
\newblock Isolated points, duality and residues.
\newblock {\em J. Pure Appl. Algebra}, 117/118:469--493, 1997.
\newblock Algorithms for algebra (Eindhoven, 1996).

\bibitem{nonkan2013weyl}
Ibrahim Nonkań.
\newblock The {W}eyl {A}lgebra and {N}oetherian operators.
\newblock {\em Afr. Diaspora J. Math. (N.S.)}, 16(1):59--69, 2013.

\bibitem{oberst1999construction}
Ulrich Oberst.
\newblock The construction of {N}oetherian operators.
\newblock {\em Journal of Algebra}, 222(2):595--620, 1999.

\bibitem{palamodov1970linear}
Viktor~P. Palamodov.
\newblock {\em Linear Differential Operators with Constant Coefficients}.
\newblock Grundlehren der mathematischen Wissenschaften in Einzeldarstellungen
  mit besonderer Ber{\"u}cksichtigung der Anwendungsgebiete. Springer-Verlag,
  1970.

\bibitem{shimoyama1996localization}
Takeshi Shimoyama and Kazuhiro Yokoyama.
\newblock Localization and primary decomposition of polynomial ideals.
\newblock {\em Journal of Symbolic Computation}, 22(3):247--277, 1996.

\bibitem{sommese2001numerical}
Andrew~J Sommese, Jan Verschelde, and Charles~W Wampler.
\newblock Numerical decomposition of the solution sets of polynomial systems
  into irreducible components.
\newblock {\em SIAM Journal on Numerical Analysis}, 38(6):2022--2046, 2001.

\bibitem{Sommese-Wampler-Verschelde-intro}
Andrew~J. Sommese, Jan Verschelde, and Charles~W. Wampler.
\newblock Introduction to numerical algebraic geometry.
\newblock In {\em Solving Polynomial Equations: Foundations, Algorithms, and
  Applications}, pages 301--337. Springer Berlin Heidelberg, Berlin,
  Heidelberg, 2005.

\bibitem{Sommese-Wampler-book}
Andrew~J. Sommese and Charles~W. Wampler, II.
\newblock {\em The numerical solution of systems of polynomials}.
\newblock World Scientific Publishing Co. Pte. Ltd., Hackensack, NJ, 2005.

\bibitem{sturmfels2002solving}
Bernd Sturmfels.
\newblock {\em Solving systems of polynomial equations}, volume~97 of {\em CBMS
  Regional Conference Series in Mathematics}.
\newblock Published for the Conference Board of the Mathematical Sciences,
  Washington, DC; by the American Mathematical Society, Providence, RI, 2002.

\end{thebibliography}
\bibliographystyle{plain}

\end{document}